\documentclass[12pt, reqno]{amsart}
\usepackage{amsmath, amsthm, amscd, amsfonts, amssymb, graphicx, color}
\usepackage[bookmarksnumbered, colorlinks, plainpages]{hyperref}

\usepackage{mathrsfs}
\usepackage{enumerate}

\textheight 22.5truecm \textwidth 14.5truecm
\setlength{\oddsidemargin}{0.35in}\setlength{\evensidemargin}{0.35in}

\setlength{\topmargin}{-.5cm}

\newtheorem{theorem}{Theorem}[section]
\newtheorem{lemma}[theorem]{Lemma}
\newtheorem{proposition}[theorem]{Proposition}
\newtheorem{corollary}[theorem]{Corollary}
\theoremstyle{definition}
\newtheorem{definition}[theorem]{Definition}
\newtheorem{example}[theorem]{Example}

\theoremstyle{remark}
\newtheorem{remark}[theorem]{Remark}

\numberwithin{equation}{section}

\newcommand{\e}{\varepsilon}
\newcommand{\wh}{\widehat}
\newcommand{\U}{\mathbf{1}}
\newcommand{\z}{\mathbf{0}}

\newcommand{\tp}{\otimes}
\newcommand{\ptp}{\mathbin{\hat{\otimes}}}

\DeclareMathOperator{\Inv}{Inv}
\DeclareMathOperator{\Lip}{Lip}
\DeclareMathOperator{\rad}{rad}

\newcommand{\C}{\mathbb{C}}
\newcommand{\T}{\mathbb{T}}

\newcommand{{\fA}}{{\mathfrak{{A}}}}
\newcommand{{\fB}}{{\mathfrak{{B}}}}

\newcommand{{\cA}}{{\mathcal{{A}}}}
\newcommand{{\cB}}{{\mathcal{{B}}}}
\newcommand{{\cE}}{{\mathcal{{E}}}}
\newcommand{{\cI}}{{\mathcal{{I}}}}
\newcommand{{\cM}}{{\mathcal{{M}}}}
\newcommand{{\cX}}{{\mathcal{{X}}}}

\newcommand{{\scrA}}{\mathscr{A}}
\newcommand{\A}{\scrA}

\newcommand{\Sp}{\text{\textsc{sp}}}
\newcommand{\vsp}{\vec{\text{\textsc{sp}}}}

\newcommand{\mis}{\mathfrak{M}}
\newcommand{\I}{\mathrm{i}}

\newcommand{\set}[1]{\{#1\}}
\newcommand{\bigset}[1]{\bigl\{ #1 \bigr\}}

\newcommand{\Bigabs}[1]{\Bigl\lvert #1 \Bigr\rvert}
\newcommand{\bigprn}[1]{\bigl( #1 \bigr)}

\newcommand{\enorm}{\lVert\,\cdot\,\rVert}
\newcommand{\norm}[1]{\lVert #1 \rVert}

\newcommand{\Bignorm}[1]{\Bigl\lVert #1 \Bigr\rVert}

\newcommand{\lVertt}{\lvert\mspace{-2mu}\lvert\mspace{-2mu}\lvert}
\newcommand{\rVertt}{\rvert\mspace{-2mu}\rvert\mspace{-2mu}\rvert}
\newcommand{\enormm}{\lVertt\, \cdot \,\rVertt}
\newcommand{\normm}[1]{\lVertt #1 \rVertt}

\begin{document}
\setcounter{page}{1}

\title[Characters on Vector-valued Function Algebras]%
{Vector-valued characters on Vector-valued Function Algebras}

\author[M. Abtahi]{Mortaza Abtahi}
\address{Mortaza Abtahi\newline
School of Mathematics and Computer Sciences,
Damghan University, Damghan,
P.O.BOX 36715-364, Iran}

\email{\textcolor[rgb]{0.00,0.00,0.84}{abtahi@du.ac.ir; mortaza.abtahi@gmail.com}}


\subjclass[2010]{Primary 46J10, 46H10; Secondary 46J20, 46E40.}

\keywords{Algebras of continuous vector-valued functions,
Banach function algebras, Vector-valued function algebras,
Characters, Maximal ideals.}

\date{Received: xxxxxx; Revised: yyyyyy; Accepted: zzzzzz.}

\begin{abstract}
  Let $A$ be a commutative Banach algebra and $X$ be a compact space. The
  class of Banach $A$-valued function algebras on $X$ consists of subalgebras
  of $C(X,A)$ with certain properties. We introduce the notion of $A$-characters
  on an $A$-valued function algebra $\A$ as homomorphisms from $\A$ into $A$
  that basically have the same properties as the evaluation homomorphisms $\cE_x:f\mapsto f(x)$,
  with $x\in X$. For the so-called natural $A$-valued function algebras, such as
  $C(X,A)$ and $\Lip(X,A)$, we show that $\cE_x$ ($x\in X$) are the only
  $A$-characters. Vector-valued characters are utilized to identify vector-valued
  spectrums. When $A=\C$, Banach $A$-valued function algebras reduce to
  Banach function algebras, and $A$-characters reduce to characters.
\end{abstract}
\maketitle

\section{Introduction and Preliminaries}
\label{sec:intro}

In this paper, we consider commutative unital Banach algebras over the complex field $\C$.
For more information on the theory of commutative Banach algebras,
see the texts \cite{BD,Dales,CBA,Zelazko}.

\subsection{Characters}
Let $A$ be a commutative Banach algebra with identity $\U$.
Every nonzero homomorphism $\phi:A\to\C$ is called a \emph{character} of $A$.
The set of all characters of $A$ is denoted by $\mis(A)$. It is well-known that
$\mis(A)$ is nonempty and its elements are automatically continuous
\cite[Lemma 2.1.5]{CBA}. For every $a\in A$, define $\hat a:\mis(A)\to\C$
by $\hat a(\phi)=\phi(a)$. The \emph{Gelfand topology} of $\mis(A)$ is then
the weakest topology on $\mis(A)$ that makes every $\hat a$ a continuous function.
Equipped with the Gelfand topology, $\mis(A)$ is a compact Hausdorff space.
The set $\hat A=\set{\hat a:a\in A}$ is a subalgebra of
$C(\mis(A))$ and the \emph{Gelfand transform} $A\to\hat A$, $a\mapsto\hat a$,
is an algebra homomorphisms with $\rad A=\set{a:\hat a=0}$ as its kernel.
The algebra $A$ is called \emph{semisimple} if $\rad A=\z$.

\subsection{Complex Function Algebras}
Let $X$ be a compact Hausdorff space and let $C(X)$ be the algebra of all continuous
functions $f:X\to\C$ equipped with the uniform norm $\|f\|_X=\sup\set{|f(x)|:x\in X}$.
We recall that a subalgebra $\fA$ of $C(X)$ that separates the points of $X$ and
contains the constant functions is a \emph{function algebra} on $X$.
A function algebra $\fA$ equipped with some algebra norm $\enorm$ such that
$\|f\|_X\leq \|f\|$, for all $f\in \fA$, is a \emph{normed function algebra}.
A complete normed function algebra is a \emph{Banach function algebra}.
If the norm of a Banach function algebra $\fA$ is equivalent to the uniform norm $\enorm_X$,
then $\fA$ is a \emph{uniform algebra}.

Let $\fA$ be a Banach function algebra on $X$. For every $x\in X$, the mapping
$\e_x:\fA\to\C$, $\e_x(f)=f(x)$, is a character of $\fA$, and
the mapping $J:X\to\mis(\fA)$, $x\mapsto\e_x$, imbeds $X$ homeomorphically as
a compact subset of $\mis(\fA)$. When $J$ is surjective, one calls $\fA$
\emph{natural} \cite[Chapter 4]{Dales}. For instance, $C(X)$ is a natural uniform algebra.
We remark that every semisimple commutative unital Banach algebra $A$ can
be seen (through its Gelfand transform) as a natural Banach function
algebra on $X=\mis(A)$. For more on function algebras
see, for example, \cite{Gamelin-UA,Leibowitz}.

\subsection{Vector-valued Function Algebras}
Let $(A,\enorm)$ be a commutative unital Banach algebra. The set of
all $A$-valued continuous functions on $X$ is denoted
by $C(X,A)$. Note that $C(X)=C(X,\C)$. For $f,g\in C(X,A)$ and $\alpha\in\C$,
the pointwise operations $\alpha f+g$ and $fg$ are defined in the obvious way.
The \emph{uniform norm} of each function $f\in C(X,A)$ is defined by
$\|f\|_X=\sup\set{\|f(x)\|:x\in X}$. In this setting, $\bigl(C(X,A),\enorm_X\bigr)$
is a commutative unital Banach algebra.

Starting by Yood \cite{Yood}, in 1950, the character space of $C(X,A)$
has been studied by many authors. In 1957, Hausner \cite{Hausner} proved that
$\tau$ is a character of $C(X,A)$ if, and only if,
there exist a point $x\in X$ and a character $\phi\in\mis(A)$ such that
$\tau(f) = \phi(f(x))$, for all $f\in C(X,A)$, whence $\mis(C(X,A))$ is
homeomorphic to $X \times \mis(A)$. Recently, in \cite{Ab-Ab}, other characterizations
of maximal ideals in $C(X,A)$ have been presented.

Analogous with complex function algebras, vector-valued function
algebras are defined (Definition \ref{dfn:vector-valued-function-algebras}).

\subsection{Vector-valued Characters}

Let $\A$ be an $A$-valued function algebra on $X$. For each $x\in X$, consider the
evaluation homomorphism $\cE_x:\A\to A$, $x\mapsto f(x)$. These $A$-valued homomorphisms
are included in a certain class of homomorphisms that will be introduced and studied in
Section \ref{sec:vector-valued-characters} under the name of \emph{vector-valued characters}.
More precisely,
a homomorphism $\Psi:\A\to A$ is called an \emph{$A$-character} if $\Psi(\U)=\U$
and $\phi(\Psi f)=\Psi(\phi\circ f)$, for all $f\in \A$ and $\phi\in\mis(A)$.
See Definition \ref{dfn:vector-valued-character}.
The set of all $A$-characters of $\A$ will be denoted by $\mis_A(\A)$.
Note that when $A=\C$, we have $\mis_A(\A)=\mis_\C(\A)=\mis(\A)$.

An application of vector-valued characters are presented in the forthcoming
paper \cite{abtahi-farhangi} to identify the vector-valued spectrum of
functions $f\in\A$. It is known that the spectrum of an element $a\in A$
is equal to $\Sp(a)=\set{\phi(a):\phi\in \mis(A)}$. In \cite{abtahi-farhangi}
the $A$-valued spectrum $\vsp_A(f)$ of functions $f\in \A$ are studied and
it is proved that, under certain conditions,
\[
  \vsp_A(f) = \set{\Psi(f):\Psi\in \mis_A(\A)}.
\]

\subsection{Notations and conventions}
Since, in this paper, we are dealing with different types of functions and algebras,
ambiguity may arise. Hence, a clear declaration of notations and conventions is
given here.

\begin{enumerate}[(1)]

  \item Throughout the paper, $X$ is a compact Hausdorff space, and
  $A$ is a commutative \emph{semisimple} Banach algebra with identity $\U$. The set of
  invertible elements of $A$ is denoted by $\Inv(A)$.

  \item If $f:X\to\C$ is a function and $a\in A$, we write $fa$ to denote the $A$-valued function
  $X\to A$, $x\mapsto f(x)a$.
  If $\fA$ is a complex-valued function algebra on $X$, we let $\fA A$ be
  the linear span of $\set{fa:f\in\fA,\,a\in A}$. Hence, an element $f\in \fA A$
  is of the form $f=f_1a_1+\dotsb+f_na_n$ with $f_j\in \fA$ and $a_j\in A$.

  \item Given an element $a\in A$, we use the same notation $a$ for the constant
  function $X\to A$ given by $a(x)=a$, for all $x\in X$, and consider $A$ as a closed
  subalgebra of $C(X,A)$.
  Since we assume that $A$ has an identity $\U$, we identify $\C$ with
  the closed subalgebra $\C\U$ of $A$, and thus every continuous
  function $f:X\to\C$ can be seen as the continuous A-valued function
  $X\to A$, $x\mapsto f(x)\U$; we use the same notation $f$ for
  this $A$-valued function, and regard $C(X)$ as a closed subalgebra of $C(X,A)$.

  \item To every continuous function $f:X\to A$, we correspond the function
  \[
     \tilde f:\mis(A)\to C(X),\quad \tilde f(\phi)=\phi\circ f.
  \]
  If $\cI$ is a family of continuous $A$-valued functions on $X$, we define
  \[
    \phi[\cI]=\set{\phi\circ f: f\in\cI}=\set{\tilde f(\phi):f\in\cI}.
  \]
\end{enumerate}

\section{Vector-valued Function Algebras}
\label{sec:vector-valued-function-algebras}

In this section, we introduce the notion of vector-valued function algebras
as subalgebras of $C(X,A)$ with certain properties.
Vector-valued function algebras have been defined and studied by others
(e.g.\ \cite{Nikou-Ofarrell}).

\begin{definition}
\label{dfn:vector-valued-function-algebras}
  Let $X$ be a compact Hausdorff space, and $(A,\enorm)$ be a commutative unital
  Banach algebra. An \emph{$A$-valued function algebra} on $X$ is a subalgebra $\A$
  of $C(X,A)$ such that (1) $\A$ contains all the constant functions $X\to A$, $x\mapsto a$,
  with $a\in A$, and (2) $\A$ separates the points of $X$ in the sense that,
  for every pair $x,y$ of distinct points in $X$, there exists $f \in A$ such that
  $f(x) \neq f(y)$. A \emph{normed $A$-valued function algebra} on $X$ is an $A$-valued
  function algebra $\A$ on $X$ endowed with some algebra norm $\enormm$ such that
  the restriction of $\enormm$ to $A$ is equivalent to the original norm $\enorm$ of $A$,
  and $\|f\|_X \leq \normm{f}$, for every $f\in \A$.
  A complete normed $A$-valued function algebra is called
  a \emph{Banach $A$-valued function algebra}. A Banach $A$-valued function algebra
  $\A$ is called an \emph{$A$-valued uniform algebra} if the given norm of $\A$ is equivalent
  to the uniform norm $\enorm_X$.
\end{definition}

If there is no risk of confusion, instead of $\enormm$, we use the same notation $\enorm$ for the norm
of $\A$.

Let $\A$ be an $A$-valued function algebra on $X$. For every $x\in X$, define
$\cE_x:\A\to A$ by $\cE_x(f)=f(x)$. We call $\cE_x$ the \emph{evaluation homomorphism}
at $x$. Our definition of Banach $A$-valued function algebras implies that every
evaluation homomorphism $\cE_x$ is continuous. As it is mentioned in \cite{Nikou-Ofarrell},
if the condition $\|f\|_X \leq \normm{f}$, for all $f\in\A$, is replaced by the requirement
that every evaluation homomorphism $\cE_x$ is continuous, then
one can find some constant $M$ such that
\[
  \|f\|_X \leq M \normm{f} \quad (f\in\A).
\]

\subsection{Admissible algebras}
Given a complex-valued function algebra $\fA$ and an $A$-valued function algebra $\A$
on $X$, according to \cite[Definition 2.1]{Nikou-Ofarrell}, the quadruple $(X,A,\fA,\A)$
is \emph{admissible} if $\fA$ is natural, $\fA A\subset \A$, and
\[ \set{\phi\circ f: \phi\in\mis(A), f\in \A} \subset \fA. \]
Taking this into account, we make the following definition.

\begin{definition}
  An $A$-valued function algebra $\A$ is said to be \emph{admissible} if
  \begin{equation}\label{eqn:admissible-A-valued-FA}
     \set{\phi\circ f: \phi\in\mis(A), f\in \A} \subset \A.
  \end{equation}
\end{definition}

When $\A$ is admissible, we set $\fA=C(X)\cap\A$ which is the subalgebra
of $\A$ consisting of all complex-valued functions in $\A$ and forms a complex-valued
function algebra by itself. Note that, in this case, $\fA=\phi[\A]$, for all $\phi\in\mis(A)$.

\begin{example}
  Let $\fA$ be a complex-valued function algebra on $X$. Then $\fA A$ is an admissible
  $A$-valued function algebra on $X$. Hence, the uniform closure of $\fA A$ in $C(X,A)$ is
  an admissible $A$-valued uniform algebra
  (Proposition \ref{prop:uniform-closure-of-function-algebras} below).
\end{example}

Another example of an admissible Banach $A$-valued function algebra is the
$A$-valued Lipschitz function algebra $\Lip(X,A)$, where $X$ is a compact metric space.
Recently, in \cite{Esmaeili-Mahyar}, the character space and \'Silov boundary of
$\Lip(X,A)$ has been studied. For certain properties of vector-valued Lipschitz spaces,
see \cite{Cao} and \cite{Johnson-thesis,Johnson-art}.

\bigskip
The following shows that not all vector-valued function algebras are admissible.

\begin{example}
  Let $K$ be a compact subset of $\C$ which is not polynomially convex
  so that $P(K)\neq R(K)$. For example, let $K=\T$ be the unit circle. Set
  \[
    \A=\set{(f_p,f_r): f_p\in P(K), f_r\in R(K)}.
  \]

  Then $\A$ is a uniformly closed subalgebra of $C(K,\C^2)$ which contains all the constant
  functions $(\alpha,\beta)\in\C^2$ and separates the points of $K$.
  Hence $\A$ is a $\C^2$-valued uniform algebra on $K$.
  Let $\U=(1,1)$ be the unit element of $\C^2$, and let $\pi_1$ and $\pi_2$
  be the coordinate projections of $\C^2$. Then $\mis(\C^2)=\set{\pi_1,\pi_2}$, and
  \begin{align*}
    \pi_1[\A]\U & = \set{f\U:f\in P(K)} = \set{(f,f):f\in P(K)}, \\
    \pi_2[\A]\U & = \set{f\U:f\in R(K)} = \set{(f,f):f\in R(K)}.
  \end{align*}
  We see that $\pi_1[\A]\subset\A$ while $\pi_2[\A]\not\subset\A$.
  Hence $\A$ is not admissible.
\end{example}

\begin{proposition}
\label{prop:uniform-closure-of-function-algebras}
  Suppose $\A$ is an admissible $A$-valued function algebra on $X$.
  Then the uniform closure $\bar\A$ is an admissible $A$-valued uniform algebra on $X$ and
  $\bar\fA=C(X)\cap\bar\A$.
\end{proposition}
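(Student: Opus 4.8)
The plan is to establish the three parts of the statement in sequence — that $\bar\A$ is an $A$-valued function algebra, that it is an $A$-valued uniform algebra, and that it is admissible — and then to prove the identity $\bar\fA = C(X)\cap\bar\A$, where $\bar\fA$ denotes the uniform closure of $\fA = C(X)\cap\A$.

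First I would observe that $\bar\A$ is a subalgebra of $C(X,A)$: since $C(X,A)$ is a Banach algebra under $\enorm_X$ and $\A$ is a subalgebra, passing to the uniform closure preserves the algebra operations, using continuity of multiplication together with the fact that a uniformly convergent sequence in $\A$ is norm-bounded. Because $\A\subset\bar\A$, the closure still contains every constant function $x\mapsto a$ ($a\in A$) and still separates the points of $X$; hence $\bar\A$ is an $A$-valued function algebra. Completeness is automatic since $\bar\A$ is a closed subspace of the Banach space $C(X,A)$, and the norm in play is $\enorm_X$ itself, trivially equivalent to the uniform norm, while on the constant functions this norm agrees with the original norm of $A$. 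Therefore $\bar\A$ is an $A$-valued uniform algebra.

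The crucial step is admissibility of $\bar\A$, that is, $\phi\circ f\in\bar\A$ for every $f\in\bar\A$ and $\phi\in\mis(A)$. Given such an $f$, pick $f_n\in\A$ with $\norm{f_n-f}_X\to 0$. Each $\phi$ is continuous (indeed bounded, with $\norm{\phi}\le 1$), so
$\norm{\phi\circ f_n-\phi\circ f}_X = \sup_{x\in X}\bigabs{\phi\bigl(f_n(x)-f(x)\bigr)} \le \norm{\phi}\,\norm{f_n-f}_X\to 0$;
since admissibility of $\A$ gives $\phi\circ f_n\in\A$, the function $\phi\circ f$ is a uniform limit of members of $\A$ and thus lies in $\bar\A$. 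I regard this as the heart of the argument; the remaining assertions are bookkeeping with closures.

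Finally I would prove $\bar\fA = C(X)\cap\bar\A$. The inclusion $\bar\fA\subset C(X)\cap\bar\A$ holds because $\fA\subset C(X)\cap\bar\A$ and $C(X)\cap\bar\A$ is closed in $C(X,A)$, $C(X)$ being a closed subalgebra of $C(X,A)$. For the reverse inclusion, fix any $\phi_0\in\mis(A)$ — such a character exists since $\mis(A)\neq\es$ — and note that for $f\in C(X)\cap\bar\A$ one has $\phi_0\circ f = f$, because $f$ takes values in $\C\U$ and $\phi_0(\U)=1$. Writing $f=\lim_n f_n$ with $f_n\in\A$, each $\phi_0\circ f_n$ lies in $C(X)\cap\A=\fA$ by admissibility of $\A$, and $\phi_0\circ f_n\to\phi_0\circ f = f$ uniformly by the estimate above; hence $f\in\bar\fA$, which completes the proof.
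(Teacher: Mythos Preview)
Your proof is correct and follows essentially the same approach as the paper: both arguments pick a character $\phi\in\mis(A)$, use $\phi\circ f=f$ for scalar-valued $f$, and show that $\phi\circ f_n\in\fA$ converges uniformly to $f$ to obtain the nontrivial inclusion $C(X)\cap\bar\A\subset\bar\fA$. Your write-up is in fact more complete than the paper's, since you spell out the admissibility of $\bar\A$ explicitly (via the same estimate $\norm{\phi\circ f_n-\phi\circ f}_X\le\norm{f_n-f}_X$), whereas the paper simply asserts that $\bar\A$ is an $A$-valued uniform algebra and moves on.
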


\begin{proof}
  Recall that $\fA=C(X)\cap \A$. The fact
  that $\bar\A$ is an $A$-valued uniform algebra is clear. The inclusion
  $\bar\fA\subset C(X)\cap\bar\A$ is also obvious. Take $f\in C(X)\cap\bar\A$.
  Then, there exists a sequence $\set{f_n}$ of $A$-valued functions in $\A$ such that
  $f_n\to f$ uniformly on $X$. For some $\phi\in\mis(A)$, take $g_n=\phi\circ f_n$.
  Then $g_n\in\fA$ and, since $f=\phi\circ f$, we have
  \[
    \norm{g_n-f}_X = \norm{\phi\circ f_n - \phi\circ f}_X
    \leq \norm{f_n-f}_X \to 0.
  \]
  Therefore, $g_n\to f$ uniformly on $X$ and thus $f\in\bar\fA$.
\end{proof}

\subsection{Certain vector-valued uniform algebras}

  Let $\fA_0$ be a complex function algebra on $X$, and let $\fA$ and $\A$ be the uniform
  closures of $\fA_0$ and $\fA_0A$, in $C(X)$ and $C(X,A)$, respectively. Then $\A$
  is an admissible $A$-valued uniform algebra on $X$ with $\fA=C(X)\cap \A$.
  The algebra $\A$ is isometrically isomorphic to the injective tensor product
  $\fA \ptp_\epsilon A$ (cf. \cite[Proposition 1.5.6]{CBA}). To see this, let
  $T:\fA\tp A \to \A$ be the unique linear mapping, given by \cite[Theorem 42.6]{BD},
  such that $T(f\tp a)(x) = f(x)a$, for all $x\in X$.
  Let $\fA^*_1$ and $A^*_1$ denote the closed unit ball of $\fA^*$ and $A^*$,
  respectively, and let $\enorm_\epsilon$ denote the injective tensor norm. Then
  \begin{align*}
    \Bignorm{T\Bigl(\sum_{i=1}^n f_i\tp a_i\Bigr)}_X
    & = \sup_{x\in X} \Bignorm{\sum_{i=1}^n f_i(x)a_i}
      = \sup_{x\in X} \sup_{\nu\in A^*_1}
        \Bigabs{\sum_{i=1}^n f_i(x)\nu(a_i)} \\
    & = \sup_{\nu\in A^*_1} \Bignorm{\sum_{i=1}^n f_i(\cdot) \nu(a_i)}_X
      = \sup_{\nu\in A^*_1}\sup_{\mu\in\fA^*_1}
        \Bigabs{\mu\Bigl(\sum_{i=1}^n f_i(\cdot)\nu(a_i)\Bigr)} \\
    & = \sup_{\mu\in\fA^*_1}\sup_{\nu\in A^*_1}
        \Bigabs{\sum_{i=1}^n \mu(f_i) \nu(a_i)}
      = \Bignorm{\sum_{i=1}^n f_i\tp a_i}_\epsilon.
  \end{align*}

  Hence $T$ extends to an isometry $\overline T$ from $\fA\ptp_\epsilon A$ into $\A$.
  Since the range of $T$ contains $\fA_0A$, which is dense in $\A$,
  the range of $\overline T$ is the whole of $\A$. We remark that,
  by a theorem of Tomiyama \cite{Tomiyama}, the maximal ideal space $\mis(\A)$
  of $\A$ is homeomorphic to $\mis(\fA)\times\mis(A)$.

Now, let $K$ be a compact subset of $\C$. Associated with $K$
there are three vector-valued uniform algebras in which
we are interested. Let $P_0(K,A)$ be the algebra of the restriction to $K$
of all polynomials $p(z)=a_nz^n+\dotsb+a_1z+a_0$ with coefficients
$a_0,a_1,\dotsc,a_n$ in $A$. Let $R_0(K,A)$ be the algebra
of the restriction to $K$ of all rational functions of the form
$p(z)/q(z)$, where $p(z)$ and $q(z)$ are polynomials with coefficients
in $A$, and $q(\lambda) \in \Inv(A)$, whenever $\lambda\in K$. And
let $H_0(K,A)$ be the algebra of all $A$-valued functions on $K$
having a holomorphic extension to a neighbourhood of $K$.

When $A=\C$, we drop $A$ and write $P_0(K)$, $R_0(K)$ and $H_0(K)$.
Their uniform closures in $C(K)$, denoted by $P(K)$, $R(K)$ and $H(K)$, are
complex uniform algebras (for more on complex uniform algebras, see
\cite{Gamelin-UA} or \cite{Leibowitz}).

The algebras $P_0(K,A)$, $R_0(K,A)$ and $H_0(K,A)$ are $A$-valued function algebras
on $K$, and their uniform closures in $C(K,A)$, denoted by $P(K,A)$, $R(K,A)$ and $H(K,A)$,
are $A$-valued uniform algebras. It obvious that
\[
  P(K,A)\subset R(K,A) \subset H(K,A).
\]

\medskip
Every polynomial $p(z)=a_nz^n+\dotsb+a_1z+a_0$ in $P_0(K,A)$ can be
written as $p(z)=p_0(z)a_0+p_1(z)a_1+\dotsb+p_n(z)a_n$,
where $p_0,p_1,\dotsc,p_n$ are polynomials in $P_0(K)$.
Thus $P_0(K,A)=P_0(K)A$. The above discussion shows that $P(K,A)$ is
isometrically isomorphic to $P(K)\ptp_\e A$. The maximal ideal space
of $P(K)$ is homeomorphic to $\hat K$, the polynomially
convex hull of $K$ (e.g.\ \cite[Section 5.2]{Leibowitz}).
The maximal ideal space of $P(K,A)$ is, therefore, homeomorphic to $\hat K\times \mis(A)$;
see also \cite{Nikou-Ofarrell}.

\medskip
Runge's classical approximation theorem states that if $\Lambda$ is a subset of $\C$
such that $\Lambda$ has nonempty intersection with each bounded component of
$\C\setminus K$, then every function $f\in H_0(K)$ can be approximated uniformly
on $K$ by rational functions with poles only among the points of $\Lambda$ and at infinity
(\cite[Theorem 7.7]{BD}). In particular, $R(K)=H(K)$.
The following is a version of Runge's theorem
for vector-valued functions.

\begin{theorem}[Runge]
  Let $K$ be a compact subset of $\C$, and let $\Lambda$ be a subset of
  $\C\setminus K$ having nonempty intersection with each bounded component
  of $\C\setminus K$. Then every function $f\in H_0(K,A)$ can be approximated
  uniformly on $K$ by $A$-valued rational functions of the form
  \begin{equation}\label{eqn:r(z)}
    r(z)=r_1(z)a_1+r_2(z)a_2+ \dotsb +r_n(z)a_n,
  \end{equation}
  where $r_i(z)$, for $1\leq i \leq n$, are rational functions in $R_0(K)$
  with poles only among the points of $\Lambda$ and at infinity,
  and $a_1,a_2,\dotsc,a_n\in A$.
\end{theorem}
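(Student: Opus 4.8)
The plan is to follow the classical proof of Runge's theorem, replacing the scalar Cauchy integral by its $A$-valued analogue and deferring the pole-pushing step to the scalar version of the theorem quoted above. Fix $f\in H_0(K,A)$ together with a bounded open set $U\supset K$ to which $f$ extends holomorphically (as an $A$-valued function, still denoted $f$). Exactly as in the proof of \cite[Theorem 7.7]{BD} — via a sufficiently fine grid of closed squares — one constructs a cycle $\Gamma$, a finite formal sum of oriented segments, lying in $U\setminus K$ and having winding number $1$ about every point of $K$ and $0$ about every point of $\C\setminus U$. The key analytic input is then the $A$-valued Cauchy integral formula
\[
  f(z)=\frac{1}{2\pi\I}\int_\Gamma\frac{f(w)}{w-z}\,dw\qquad(z\in K),
\]
where, for fixed $z\in K$, the integrand is a norm-continuous $A$-valued function of $w\in\Gamma$ and the integral is its (norm-convergent) Riemann integral. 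To see this, compose both sides with an arbitrary character $\phi\in\mis(A)$: being a continuous linear functional, $\phi$ makes $\phi\circ f$ holomorphic on $U$ and commutes with the integral, so the identity reduces to the scalar Cauchy formula for $\phi\circ f$; since $A$ is semisimple, $\mis(A)$ separates the points of $A$, and the $A$-valued identity follows.

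Next I would discretise the integral. As $\Gamma$ is compact with $\operatorname{dist}(\Gamma,K)>0$, the map $(w,z)\mapsto f(w)/(w-z)$ is bounded and uniformly continuous on $\Gamma\times K$. Hence, given $\e>0$, a sufficiently fine partition of the parameter interval of $\Gamma$ yields points $w_1,\dots,w_m\in\Gamma\subset\C\setminus K$ and scalars $c_1,\dots,c_m\in\C$ such that the Riemann sum
\[
  g(z)=\sum_{k=1}^m\frac{c_k}{w_k-z}\,f(w_k)
\]
satisfies $\norm{f-g}_K<\e/2$, the estimate being uniform in $z\in K$ by the uniform continuity. The function $g$ already has the form \eqref{eqn:r(z)} — with coefficients $c_kf(w_k)\in A$ — except that its poles $w_k$ need not lie in $\Lambda$.

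Finally I would push the poles into $\Lambda\cup\{\infty\}$. For each $k$, the scalar rational function $z\mapsto 1/(w_k-z)$ is holomorphic on $\C\setminus\{w_k\}$, a neighbourhood of $K$, hence lies in $H_0(K)$, so the scalar Runge theorem provides $\rho_k\in R_0(K)$ with poles only among the points of $\Lambda$ and at infinity for which $\sup_{z\in K}\bigabs{1/(w_k-z)-\rho_k(z)}$ is as small as we wish. Taking these $m$ approximations sharp enough and setting $r(z)=\sum_{k=1}^m\rho_k(z)\,c_kf(w_k)$ gives $\norm{g-r}_K<\e/2$, whence $\norm{f-r}_K<\e$; collecting terms shows that $r$ is precisely of the form \eqref{eqn:r(z)}, with each $r_i\in R_0(K)$ having poles only among the points of $\Lambda$ and at infinity and each $a_i\in A$. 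The only points demanding care — and none is deep — are the standard construction of the cycle $\Gamma$ and the uniform-in-$z$ convergence of the Riemann sums; the pole-pushing is a verbatim appeal to the scalar theorem.
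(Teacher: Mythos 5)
Your proof is correct, but the middle step takes a genuinely different route from the paper. After the vector-valued Cauchy integral formula, the paper invokes the Bonsall--Duncan machinery directly: it chooses a punched disc envelope $E$ for $(K,D)$ and cites \cite[Proposition 6.5]{BD} to expand $f$ in a Laurent-type series $\sum_n \alpha_n(z-z_0)^n+\sum_{j,n}\beta_{jn}(z-z_j)^{-n}$ with coefficients in $A$, converging uniformly on $K$; truncating gives the first rational approximation with poles just outside $K$, and scalar Runge then relocates the poles of the scalar factors into $\Lambda\cup\{\infty\}$, exactly as in your last step. You instead build a cycle $\Gamma$ in $U\setminus K$ with the right winding numbers and discretise the Cauchy integral by Riemann sums, uniformly in $z\in K$ via uniform continuity of $(w,z)\mapsto f(w)/(w-z)$ on $\Gamma\times K$ --- a self-contained Rudin-style argument that avoids the series expansion and produces the intermediate approximant $\sum_k c_k (w_k-z)^{-1}f(w_k)$ directly; the paper's version is shorter on the page because it leans on \cite{BD}, while yours needs only the Cauchy formula and the grid construction. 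Two small remarks: your verification of the $A$-valued Cauchy formula by composing with $\phi\in\mis(A)$ and invoking semisimplicity works under the paper's standing hypothesis on $A$, but it is cleaner (and needs no semisimplicity) to test against all of $A^*$, which separates points by Hahn--Banach --- this is in fact how the vector-valued Cauchy formula is usually justified, and the paper simply cites \cite{BD} and \cite[Theorem 3.31]{Rudin-FA} for it; also, attributing the grid-of-squares cycle construction to the proof of \cite[Theorem 7.7]{BD} is not quite accurate, since Bonsall--Duncan argue via punched disc envelopes rather than grids, though the construction you describe is standard and correct.
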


\begin{proof}
  Take $f\in H_0(K,A)$. Then, there exists an open set $D$ such that $K\subset D$ and
  $f:D\to A$ is holomorphic. We use the same notation as in
  \cite{BD}. We let $E$ be a punched disc envelope for $(K,D)$;
  see \cite[Definition 6.2]{BD}. The Cauchy theorem and
  the Cauchy integral formula are also valid for Banach space valued holomorphic
  functions (see the remark after Corollary 6.6 in \cite{BD} and
  \cite[Theorem 3.31]{Rudin-FA}). Therefore,
  \[
    f(z) = \frac1{2\pi \I}\int_{\partial E} \frac{f(s)}{s-z}ds \quad (z\in K).
  \]

  \noindent
  Then, by \cite[Proposition 6.5]{BD}, one can write
  \begin{equation}\label{eqn:series-expansion}
    f(z)=\sum_{n=0}^\infty \alpha_n(z-z_0)^n +
        \sum_{j=1}^m \sum_{n=1}^\infty \frac{\beta_{jn}}{(z-z_j)^n} \quad (z\in K),
  \end{equation}
  where $z_0\in \C$, $z_1,z_2,\dotsc,z_m\in \C\setminus K$, and the coefficients
  $\alpha_n, \beta_{jn}$ belong to $A$. Note that the series in \eqref{eqn:series-expansion}
  converges uniformly on $K$.

  So far, we have seen that $f$ can be approximated uniformly on $K$ by $A$-valued rational
  functions of the form \eqref{eqn:r(z)}, where $r_i(z)$, for $1\leq i \leq n$, are rational
  functions in $R_0(K)$ with poles just outside $K$. Using Runge's classical theorem,
  each $r_i(z)$ can be approximated uniformly on $K$ by rational functions
  with poles only among the points of $\Lambda$ and at infinity.
  Hence, we conclude that $f$ can be approximated
  uniformly on $K$ by rational functions of the form \eqref{eqn:r(z)}
  with preassigned poles.
\end{proof}

As a consequence of the above theorem, we see that
the uniform closures of $R_0(K)A$, $H_0(K)A$, $R_0(K,A)$ and $H_0(K,A)$
are all the same. In particular,
\[ R(K,A)=H(K,A). \]

\begin{corollary}
  The algebra $R(K,A)$ is isometrically isomorphic to $R(K)\ptp_\e A$ and,
  therefore, $\mis(R(K,A))$ is homeomorphic to $K\times \mis(A)$.
\end{corollary}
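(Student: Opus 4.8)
The plan is to reduce this corollary to the general tensor-product construction carried out above for algebras of the form $\fA_0 A$, and then to Tomiyama's theorem. First I would recall the consequence of the vector-valued Runge theorem noted just above: the uniform closure of $R_0(K)A$ in $C(K,A)$ coincides with $R(K,A)$ (in fact with $H(K,A)$). Now apply the discussion preceding Runge's theorem with $X = K$ and $\fA_0 = R_0(K)$; note that $R_0(K)$ is indeed a complex function algebra on $K$, since it contains the constants and the coordinate function $z$ separates the points of $K\subset\C$. Its uniform closure in $C(K)$ is $\fA = R(K)$, and that discussion shows that the uniform closure $\A$ of $\fA_0 A = R_0(K)A$ in $C(K,A)$ is isometrically isomorphic to $\fA\ptp_\e A = R(K)\ptp_\e A$, the isometry being the extension $\overline T$ of the map determined by $T(f\tp a)(z) = f(z)a$. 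Combining the two facts, $R(K,A) \cong R(K)\ptp_\e A$ isometrically, which is the first assertion.

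For the maximal ideal space I would invoke Tomiyama's theorem, quoted above, which gives that $\mis(R(K)\ptp_\e A)$ is homeomorphic to $\mis(R(K))\times\mis(A)$. It then remains only to identify $\mis(R(K))$. Since every compact subset of $\C$ is rationally convex, the maximal ideal space of the rational uniform algebra $R(K)$ is $K$ itself, the evaluation characters $\e_\lambda$ with $\lambda\in K$ exhausting $\mis(R(K))$; this is classical (see, e.g., \cite{Leibowitz}). Hence $\mis(R(K,A))$ is homeomorphic to $K\times\mis(A)$, and under the identification the character indexed by $(\lambda,\phi)$ acts by $f\mapsto\phi(f(\lambda))$.

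The only step that really needs care is the first one: one must be certain that the uniform closure of $R_0(K)A$ is genuinely all of $R(K,A)$, not merely a proper subalgebra. The subtlety is that $R_0(K,A)$ — rational functions $p(z)/q(z)$ whose coefficients lie in $A$ and whose denominators are invertible in $A$ at each point of $K$ — is a priori strictly larger than $R_0(K)A$, since, e.g., $(z\U - a)^{-1}$ with $a\in A\setminus\C\U$ need not be a finite combination $\sum r_i a_i$ with $r_i\in R_0(K)$. This gap is exactly what the vector-valued Runge theorem closes: it lets one approximate such "genuinely $A$-valued" rational functions uniformly on $K$ by functions of the form \eqref{eqn:r(z)}. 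Once that identification is in hand, everything else is bookkeeping, because the isometry computation for $\enorm_\e$ has already been done in the general setting and Tomiyama's theorem is cited.
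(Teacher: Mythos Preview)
Your proposal is correct and follows exactly the route the paper intends: use the vector-valued Runge theorem to identify $R(K,A)$ with the uniform closure of $R_0(K)A$, apply the general isometry $\overline T$ from the tensor-product discussion with $\fA_0=R_0(K)$ to obtain $R(K,A)\cong R(K)\ptp_\e A$, and then invoke Tomiyama's theorem together with $\mis(R(K))=K$. The paper does not spell this out---the corollary is stated without proof as an immediate consequence of the preceding paragraphs---and your write-up, including the observation about the gap between $R_0(K,A)$ and $R_0(K)A$ that Runge closes, makes the implicit argument explicit.
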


We remark that the equality $\mis(R(K,A))=K\times \mis(A)$ is proved
in \cite{Nikou-Ofarrell}. The authors, however, did not notice
the equality $R(K,A)=R(K)\ptp_\e A$.

\section{Vector-valued Characters}
\label{sec:vector-valued-characters}

Let $\A$ be a Banach $A$-valued function algebra on $X$, and consider the point evaluation
homomorphisms $\cE_x:\A\to A$. These kind of homomorphisms enjoy the following properties:
\begin{itemize}
  \item $\cE_x(a)=a$, for all $a\in A$,
  \item $\cE_x(\phi\circ f)=\phi(\cE_xf)$, for all $f\in\A$ and $\phi\in\mis(A)$,
  \item If $\A$ is admissible (with $\fA=C(X)\cap\A$) then $\cE_x|_\fA$ is a character of
  $\fA$, namely, the evaluation character $\e_x$.
\end{itemize}

We now introduce the class of all homomorphisms from $\A$ into $A$ having the
same properties as the point evaluation homomorphisms $\cE_x$ ($x\in X)$.

\begin{definition}
\label{dfn:vector-valued-character}
  Let $\A$ be an admissible $A$-valued function algebra on $X$.
  A homomorphism $\Psi:\A\to A$ is called an \emph{$A$-character} if $\Psi(\U)=\U$
  and $\phi(\Psi f)=\Psi(\phi\circ f)$, for all $f\in \A$ and $\phi\in\mis(A)$.
  The set of all $A$-characters of $\A$ is denoted by $\mis_A(\A)$.
\end{definition}

That every $A$-character $\Psi:\A\to A$ satisfies $\Psi(a)=a$, for all $a\in A$, is easy to
see. In fact, since $\phi(\Psi(a))=\Psi(\phi(a))=\phi(a)$, for all $\phi\in\mis(A)$,
and $A$ is semisimple, we get $\Psi(a)=a$.


\begin{proposition}
  Let $\Psi:\A\to A$ be a linear operator such that $\Psi(\U)=\U$ and $\phi(\Psi f)=\Psi(\phi\circ f)$,
  for all $f\in \A$ and $\phi\in\mis(A)$. Then, the following are equivalent:
  \begin{enumerate}[\upshape(i)]
    \item \label{item:Psi-is-vector-valued-character}
    $\Psi$ is an $A$-character,

    \item \label{item:Psi(Inv(cA))neq0}
    $\Psi(f)\neq\z$, for every $f\in\Inv(\A)$,

    \item \label{item:Psi(Inv(fA))neq0}
    $\Psi(f)\neq\z$, for every $f\in\Inv(\fA)$,

    \item \label{item:psi-is-in-mis(fA)}
    if $\psi=\Psi|_\fA$, then $\psi\in\mis(\fA)$.
  \end{enumerate}
\end{proposition}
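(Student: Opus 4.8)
The plan is to establish the cycle of implications $(i)\Rightarrow(ii)\Rightarrow(iii)\Rightarrow(iv)\Rightarrow(i)$. Before entering the cycle I would record two facts that follow directly from the standing hypotheses on $\Psi$, namely that $\Psi$ is linear, $\Psi(\U)=\U$, and $\phi(\Psi f)=\Psi(\phi\circ f)$ for all $f\in\A$ and $\phi\in\mis(A)$. First, every $f\in\fA=C(X)\cap\A$ is $\C\U$-valued, so $\phi\circ f=f$ for each $\phi\in\mis(A)$, and therefore $\Psi(f)=\Psi(\phi\circ f)=\phi(\Psi f)\,\U$; since this holds for all $\phi$, the scalar $\phi(\Psi f)$ is independent of $\phi$, and denoting it by $\psi(f)$ we obtain a linear functional $\psi=\Psi|_{\fA}\colon\fA\to\C$ with $\psi(\U)=1$ and $\Psi(f)=\psi(f)\,\U$ for all $f\in\fA$. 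Second, for an arbitrary $f\in\A$ and $\phi\in\mis(A)$, admissibility gives $\phi\circ f\in\fA$ (indeed $\fA=\phi[\A]$), and applying $\phi$ to the intertwining identity yields the key formula
\[
  \phi(\Psi f)=\psi(\phi\circ f)\qquad(f\in\A,\ \phi\in\mis(A)).
\]
This identity, together with the semisimplicity of $A$, will do most of the work.

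For $(i)\Rightarrow(ii)$: if $\Psi$ is multiplicative and $f\in\Inv(\A)$, then $\Psi(f)\,\Psi(f^{-1})=\Psi(\U)=\U$, so $\Psi(f)\in\Inv(A)$ and in particular $\Psi(f)\neq\z$. The implication $(ii)\Rightarrow(iii)$ is immediate, since $\fA$ is a unital subalgebra of $\A$ sharing the identity $\U$, whence $\Inv(\fA)\subseteq\Inv(\A)$. For $(iii)\Rightarrow(iv)$: if $f\in\Inv(\fA)$ then $\Psi(f)=\psi(f)\,\U\neq\z$ by $(iii)$, so $\psi(f)\neq0$; thus $\psi$ is a linear functional on the commutative unital Banach algebra $\fA$ (note that $\fA$, being closed in $\A$, is complete) with $\psi(\U)=1$ that does not vanish on $\Inv(\fA)$, and the Gleason--Kahane--\.{Z}elazko theorem gives $\psi\in\mis(\fA)$.

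The substantive implication is $(iv)\Rightarrow(i)$. Assume $\psi=\Psi|_{\fA}\in\mis(\fA)$. Since $\Psi$ is already linear, satisfies $\Psi(\U)=\U$, and intertwines with the characters of $A$, it remains only to verify that $\Psi$ is multiplicative. Fix $f,g\in\A$. As $A$ is semisimple, it suffices to check $\phi(\Psi(fg))=\phi\bigl(\Psi(f)\,\Psi(g)\bigr)$ for every $\phi\in\mis(A)$. Using the key formula, the multiplicativity of $\phi$ on $A$ (so that $\phi\circ(fg)=(\phi\circ f)(\phi\circ g)$), the fact that $\phi\circ f,\phi\circ g\in\fA$, and the multiplicativity of $\psi$ on $\fA$, one computes
\[
  \phi(\Psi(fg))=\psi\bigl(\phi\circ(fg)\bigr)=\psi(\phi\circ f)\,\psi(\phi\circ g)=\phi(\Psi f)\,\phi(\Psi g)=\phi\bigl(\Psi(f)\,\Psi(g)\bigr),
\]
and therefore $\Psi(fg)=\Psi(f)\,\Psi(g)$, as wanted.

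The only step that invokes a nonelementary external result is $(iii)\Rightarrow(iv)$, where the Gleason--Kahane--\.{Z}elazko theorem is used to upgrade the non-vanishing condition on invertible elements to multiplicativity of $\psi$; I expect this to be the point that requires the most care, while everything else is a formal consequence of the bookkeeping identity $\phi(\Psi f)=\psi(\phi\circ f)$, the admissibility of $\A$, and the semisimplicity of $A$.
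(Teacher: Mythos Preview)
Your proof is correct and follows essentially the same route as the paper's: the same cycle $(\mathrm{i})\Rightarrow(\mathrm{ii})\Rightarrow(\mathrm{iii})\Rightarrow(\mathrm{iv})\Rightarrow(\mathrm{i})$, the Gleason--Kahane--\.{Z}elazko theorem (cited in the paper as \cite[Theorem~10.9]{Rudin-FA}) for $(\mathrm{iii})\Rightarrow(\mathrm{iv})$, and the identical semisimplicity computation for $(\mathrm{iv})\Rightarrow(\mathrm{i})$. Your preliminary bookkeeping (that $\Psi$ restricts to a scalar-valued $\psi$ on $\fA$ and that $\phi(\Psi f)=\psi(\phi\circ f)$) is implicit in the paper but makes the argument cleaner; your remark that $\fA$ is closed in $\A$ (hence Banach, so GKZ applies) is also a point the paper uses without comment.
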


\begin{proof}
  $\eqref{item:Psi-is-vector-valued-character} \Rightarrow
  \eqref{item:Psi(Inv(cA))neq0} \Rightarrow \eqref{item:Psi(Inv(fA))neq0}$ is clear.
  The implication $\eqref{item:Psi(Inv(fA))neq0} \Rightarrow \eqref{item:psi-is-in-mis(fA)}$
  follows from \cite[Theorem 10.9]{Rudin-FA}.
  To prove $\eqref{item:psi-is-in-mis(fA)}\Rightarrow\eqref{item:Psi-is-vector-valued-character}$,
  take $f,g\in \A$. For every $\phi\in\mis(A)$, we have
  \[
    \phi(\Psi(fg))=\Psi(\phi\circ fg)=\psi((\phi\circ f)(\phi\circ g))
    =\psi(\phi\circ f)\psi(\phi\circ g)=\phi(\Psi(f)\Psi(g)).
  \]
  Since $A$ is semisimple, we get $\Psi(fg)=\Psi(f)\Psi(g)$.
\end{proof}

Every $A$-character is automatically continuous by Johnson's theorem \cite{Johnson-U-of-N}.
If $A$ is a uniform algebra and $\A$ is an $A$-valued uniform algebra,
we even have $\|\Psi\|=1$, for all $A$-character $\Psi$.

\begin{proposition}
\label{prop:Psi1=Psi2-iff-...}
  Let $\Psi_1$ and $\Psi_2$ be $A$-characters on $\A$, and set $\psi_1=\Psi_1|_\fA$ and
  $\psi_2=\Psi_2|_\fA$. The following are equivalent:
  \begin{enumerate}[\upshape(i)]
    \item \label{item:Psi1=Psi2} $\Psi_1=\Psi_2$,

    \item \label{item:ker Psi1=ker Psi2} $\ker\Psi_1=\ker\Psi_2$,

    \item \label{item:ker psi1=ker psi2} $\ker\psi_1=\ker\psi_2$,

    \item \label{item:psi=psi'} $\psi_1=\psi_2$.
  \end{enumerate}
\end{proposition}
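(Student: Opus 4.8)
The plan is to establish the cycle of implications
$\eqref{item:Psi1=Psi2}\Rightarrow\eqref{item:ker Psi1=ker Psi2}\Rightarrow\eqref{item:ker psi1=ker psi2}\Rightarrow\eqref{item:psi=psi'}\Rightarrow\eqref{item:Psi1=Psi2}$.
The first implication is trivial. For $\eqref{item:ker Psi1=ker Psi2}\Rightarrow\eqref{item:ker psi1=ker psi2}$, note that
$\ker\psi_i=\ker\Psi_i\cap\fA$ by definition of the restriction, so equality of the big kernels forces equality of the small ones. For
$\eqref{item:ker psi1=ker psi2}\Rightarrow\eqref{item:psi=psi'}$, recall from the previous proposition that $\psi_1,\psi_2\in\mis(\fA)$, i.e.\ they are genuine characters of the complex function algebra $\fA$; two characters of a commutative unital algebra with the same kernel must coincide, since each is the quotient map onto $\fA/\ker\psi_i\cong\C$ followed by the canonical identification, and $\psi_i(\U)=1$ pins down the scalar.

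The substantive step is $\eqref{item:psi=psi'}\Rightarrow\eqref{item:Psi1=Psi2}$: knowing that $\Psi_1$ and $\Psi_2$ agree on the \emph{complex-valued} subalgebra $\fA=C(X)\cap\A$, I want to conclude they agree on all of $\A$. The idea is to use the defining relation of an $A$-character together with semisimplicity of $A$. Fix $f\in\A$ and $\phi\in\mis(A)$. Since $\A$ is admissible, $\phi\circ f\in\A$ and in fact $\phi\circ f\in C(X)\cap\A=\fA$ (because $\phi\circ f$ is complex-valued). Therefore
\[
  \phi(\Psi_1 f)=\Psi_1(\phi\circ f)=\psi_1(\phi\circ f)=\psi_2(\phi\circ f)=\Psi_2(\phi\circ f)=\phi(\Psi_2 f).
\]
As this holds for every $\phi\in\mis(A)$ and $A$ is semisimple, $\Psi_1 f=\Psi_2 f$; since $f\in\A$ was arbitrary, $\Psi_1=\Psi_2$.

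I expect no real obstacle here: the only point requiring a moment's care is the observation that $\phi\circ f$ lands in $\fA$ rather than merely in $\A$, which is exactly what admissibility buys us (see the discussion following the definition of admissible, where it is noted $\fA=\phi[\A]$ for every $\phi\in\mis(A)$), and the routine fact that distinct values of a character are detected already on the level of kernels. Everything else is the standard "semisimplicity lets you check equality after applying all characters of $A$" argument that was already used twice in this section.
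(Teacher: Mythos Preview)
Your proof is correct and follows essentially the same cycle of implications as the paper's own proof, with the same key observations: $\ker\psi_i=\ker\Psi_i\cap\fA$ for \eqref{item:ker Psi1=ker Psi2}$\Rightarrow$\eqref{item:ker psi1=ker psi2}, the standard fact that characters are determined by their kernels for \eqref{item:ker psi1=ker psi2}$\Rightarrow$\eqref{item:psi=psi'} (the paper cites \cite[Theorem~11.5]{Rudin-FA} here), and the semisimplicity argument via the defining relation $\phi(\Psi_i f)=\psi_i(\phi\circ f)$ for \eqref{item:psi=psi'}$\Rightarrow$\eqref{item:Psi1=Psi2}. Your explicit remark that admissibility is what places $\phi\circ f$ in $\fA$ is a helpful clarification the paper leaves implicit.
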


\begin{proof}
  The implication $\eqref{item:Psi1=Psi2} \Rightarrow \eqref{item:ker Psi1=ker Psi2}$ is
  obvious. The implication $\eqref{item:ker Psi1=ker Psi2} \Rightarrow \eqref{item:ker psi1=ker psi2}$,
  follows from the fact that $\ker\psi_i=\ker\Psi_i \cap \fA$, for $i=1,2$.
  The implication $\eqref{item:ker psi1=ker psi2} \Rightarrow \eqref{item:psi=psi'}$
  follows form \cite[Theorem 11.5]{Rudin-FA}. Finally, if we have \eqref{item:psi=psi'},
  then, for every $f\in \A$,
  \[
    \phi(\Psi_1(f))=\psi_1(\phi(f))=\psi_2(\phi(f))=\phi(\Psi_2(f)) \quad (\phi\in\mis(A)).
  \]
  Since $A$ is semisimple, we get $\Psi_1(f)=\Psi_2(f)$, for all $f\in\A$.
\end{proof}

\begin{definition}
  Let $\A$ be an admissible $A$-valued function algebra on $X$.
  Given a character $\psi\in\mis(\fA)$, if there exists an $A$-character $\Psi$ on $\A$
  such that $\Psi|_\fA=\psi$, then we say that $\psi$ \emph{lifts} to the $A$-character
  $\Psi$.
\end{definition}

Proposition \ref{prop:Psi1=Psi2-iff-...} shows that,
if $\psi\in\mis(\fA)$ lifts to $\Psi_1$ and $\Psi_2$, then $\Psi_1=\Psi_2$.
For every $x\in X$, the unique $A$-character to which the evaluation character $\e_x$
lifts is the evaluation homomorphism $\cE_x$. In the following, we
investigate conditions under which every character $\psi\in\mis(\fA)$ lifts to
some $A$-character $\Psi$. To proceed, we need some
definitions, notations and auxiliary results.

\bigskip
Suppose $\A$ is an admissible Banach $A$-valued function algebra on $X$ with
$\fA=C(X)\cap \A$. Then $\fA$ is a Banach function algebra. Since Banach function algebras
are semisimple, by Johnson's theorem \cite{Johnson-U-of-N},
for every $\phi\in\mis(A)$, the composition operator
$C_\phi:\A\to\fA$, defined by $C_\phi(f)=\phi\circ f$, is a continuous homomorphism.
Hence, for some constant $M_\phi$, we have
\begin{equation}\label{eqn:norm(phi o f)<=Mphi norm(f)}
  \norm{\phi\circ f} \leq M_\phi \norm{f} \quad (f\in \A).
\end{equation}

Now, for every $f\in\A$ consider the function $\tilde f:\mis(A)\to\fA$,
$\phi\mapsto \phi\circ f$. Set $\cX=\mis(A)$ and
$\tilde\A=\set{\tilde f:f\in\A}$. Suppose every $\tilde f$ is continuous,
that is, if $\phi_\alpha\to\phi$ in the Gelfand topology of $\mis(A)$,
then $\phi_\alpha\circ f\to\phi\circ f$ in $\fA$ (this is the
case for uniform algebras; see Corollary \ref{cor:tilde-cA-is-fA-valued-uniform-algebra}).
Then $\tilde\A$ is an $\fA$-valued function algebra on $\cX$.
In Theorem \ref{thm:every-psi-lifts-to-Psi-iff-every-f-extends-to-F},
we will discuss conditions under which $\tilde\A$ is admissible. Since
every composite operator $C_\phi:\A \to \fA$, $f\mapsto \phi\circ f$,
with $\phi\in\mis(A)$, is continuous,
we see that the inclusion map  $\tilde\A \hookrightarrow\fA^\cX$ is continuous,
where $\fA^\cX$ is given the cartesian product topology.
The Closed Graph Theorem then implies that
the inclusion map $\tilde\A \hookrightarrow C(\cX,\fA)$
is continuous so that, for some constant $M$, we have
\begin{equation}\label{eqn:norm(phi o f)<=M norm(f)}
  \norm{\phi\circ f} \leq M \norm{f} \quad (\phi\in\mis(A),\, f\in\A).
\end{equation}

Note that $\phi\circ f\in C(X)$ and $\norm{\phi\circ f}_X \leq \|\phi\| \norm{f}_X$,
for every $\phi\in A^*$, $f\in\A$. We extend $\tilde f$ to a mapping from $A^*$
to $C(X)$, and we still denoted this extension is by $\tilde f$.

\begin{proposition}
\label{prop:continuity-of-tilde f}
  With respect to the w*-topology of $A^*$ and the uniform topology of $C(X)$,
  every mapping $\tilde f:A^*\to C(X)$ is continuous on bounded subsets of $A^*$.
\end{proposition}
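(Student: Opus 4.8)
The plan is to reduce the statement to the uniform continuity, on bounded sets, of the evaluation functionals $x\mapsto\nu(f(x))$ as $\nu$ ranges over $A^*$, and then to exploit compactness of $X$. First I would fix a bounded net $\nu_\alpha\to\nu$ in the w*-topology of $A^*$, say with $\norm{\nu_\alpha}\le R$ for all $\alpha$ and $\norm{\nu}\le R$. The goal is to show $\norm{\tilde f(\nu_\alpha)-\tilde f(\nu)}_X=\sup_{x\in X}\bigabs{\nu_\alpha(f(x))-\nu(f(x))}\to0$. For each individual $x\in X$ we have $\nu_\alpha(f(x))\to\nu(f(x))$ by definition of the w*-topology, since $f(x)$ is a fixed element of $A$; the content of the proposition is to upgrade this pointwise convergence to uniform convergence in $x$.

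The key step is an equicontinuity argument. Because $f:X\to A$ is continuous and $X$ is compact, the image $f(X)$ is a norm-compact subset of $A$. Given $\e>0$, cover $f(X)$ by finitely many balls $B(a_i,\e/(3R))$, $i=1,\dots,n$, with $a_i\in f(X)$. For each $x\in X$ pick $i(x)$ with $\norm{f(x)-a_{i(x)}}<\e/(3R)$; then for every $\alpha$,
\[
  \bigabs{\nu_\alpha(f(x))-\nu(f(x))}
  \le \bigabs{\nu_\alpha(f(x)-a_{i(x)})}
     + \bigabs{\nu_\alpha(a_{i(x)})-\nu(a_{i(x)})}
     + \bigabs{\nu(a_{i(x)}-f(x))},
\]
and the first and third terms are each at most $R\cdot\e/(3R)=\e/3$. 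Now choose $\alpha_0$ so that $\bigabs{\nu_\alpha(a_i)-\nu(a_i)}<\e/3$ for all $i=1,\dots,n$ and all $\alpha\ge\alpha_0$ (possible since there are only finitely many $a_i$ and $\nu_\alpha(a_i)\to\nu(a_i)$ for each). Then for $\alpha\ge\alpha_0$ the middle term is $<\e/3$ uniformly in $x$, so $\norm{\tilde f(\nu_\alpha)-\tilde f(\nu)}_X<\e$. This proves continuity of $\tilde f$ on the ball of radius $R$, and since $R$ was arbitrary, on all bounded subsets of $A^*$.

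The main obstacle — really the only subtlety — is making sure the finite $\e$-net can be chosen uniformly enough that a \emph{single} threshold $\alpha_0$ works for all $x\in X$ simultaneously; this is exactly what the compactness of $f(X)$ delivers, by funneling the uncountably many points $f(x)$ through finitely many centers $a_i$ and using the uniform bound $R$ on $\norm{\nu_\alpha}$ to control the errors introduced by that approximation. Without the norm bound on the net the argument fails, which is why the statement is restricted to bounded subsets of $A^*$; and without compactness of $X$ one could not pass from pointwise to uniform convergence. One should also note that $\tilde f(\nu)=\nu\circ f$ does lie in $C(X)$ for every $\nu\in A^*$ (being the composition of the continuous $f$ with the norm-continuous $\nu$), so the map $\tilde f:A^*\to C(X)$ is well defined, as already observed before the statement.
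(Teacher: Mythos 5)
Your proof is correct and is essentially the paper's own argument: both use compactness (you cover the norm-compact image $f(X)$ by finitely many balls, the paper equivalently covers $X$ by finitely many sets $V_{x_i}$), the same three-term triangle inequality anchored at finitely many fixed elements of $A$, the norm bound on the net to control the outer terms, and w*-convergence to control the middle term. Your version merely makes the $\e/3$ bookkeeping explicit where the paper settles for the bound $\e(M+\|\phi_0\|+1)$.
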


\begin{proof}
  Let $\set{\phi_\alpha}$ be a net in $A^*$ that converges, in the w*-topology,
  to some $\phi_0\in A^*$ and suppose $\|\phi_\alpha\|\leq M$, for all $\alpha$.
  Take $\e>0$ and set, for every $x\in X$,
  \[
    V_x=\set{s\in X: \norm{f(s)-f(x)}<\e}.
  \]

  Then $\set{V_x:x\in X}$ is an open covering of the compact space $X$. Hence, there
  exist finitely many points $x_1,\dotsc,x_n$ in $X$ such that
  $X\subset V_{x_1}\cup \dotsb \cup V_{x_n}$. Set
  \[
    U_0 =
     \bigset{\phi\in A^*: \bigl|\phi\bigprn{f(x_i)}-\phi_0\bigprn{f(x_i)}\bigr|<\e,\,
      1\leq i \leq n}.
  \]

  The set $U_0$ is an open neighbourhood of $\phi_0$ in the w*-topology.
  Since $\phi_\alpha\to \phi_0$, there exists $\alpha_0$ such that
  $\phi_\alpha\in U_0$ for $\alpha\geq \alpha_0$.
  If $x\in X$, then $\norm{f(x)-f(x_i)}<\e$, for some $i\in\set{1,\dotsc,n}$,
  and thus, for $\alpha\geq \alpha_0$,
  \begin{align*}
    |\phi_\alpha\circ f(x)-\phi_0\circ f(x)|
     & \leq |\phi_\alpha(f(x))-\phi_\alpha(f(x_i))| + |\phi_\alpha(f(x_i))-\phi_0(f(x_i))| \\
     &  \quad + |\phi_0(f(x_i))-\phi_0(f(x))|\\
     & <M\e+\e+\|\phi_0\|\e.
  \end{align*}
  Since $x\in X$ is arbitrary, we get
  $\norm{\phi_\alpha\circ f-\phi_0\circ f}_X\leq \e(M+\|\phi_0\|+1)$, for $\alpha\geq \alpha_0$.
\end{proof}

Since $\mis(A)$ is a bounded subset of $A^*$, we get the following result for
uniform algebras.

\begin{corollary}
\label{cor:tilde-cA-is-fA-valued-uniform-algebra}
  Let $\A$ be an admissible $A$-valued uniform algebra on $A$.
  Then $\tilde f\in C(\mis(A),\fA)$, for every $f\in \A$, and
  $\tilde\A=\set{\tilde f:f\in\A}$ is an $\fA$-valued uniform algebra on $\mis(A)$.
\end{corollary}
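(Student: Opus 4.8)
The plan is to deduce the corollary directly from Proposition~\ref{prop:continuity-of-tilde f} together with the discussion preceding it, checking that the abstract definition of an $\fA$-valued function algebra is met. First I would observe that since $\A$ is an $A$-valued uniform algebra, the norm of $\A$ is equivalent to $\enorm_X$, and $\mis(A)$ is a w*-compact, hence bounded, subset of $A^*$; so Proposition~\ref{prop:continuity-of-tilde f} applies and gives that $\tilde f\colon\mis(A)\to C(X)$ is continuous for the Gelfand (= relative w*) topology on $\mis(A)$ and the uniform topology on $C(X)$. Because $\A$ is admissible, the range of $\tilde f$ lands in $\fA=C(X)\cap\A=\phi[\A]$, and $\fA$ carries a norm equivalent to $\enorm_X$ as well (it is a uniform algebra, by Proposition~\ref{prop:uniform-closure-of-function-algebras} applied to the uniform algebra $\A$, or simply because it is a closed subalgebra of the uniform algebra $\A$). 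Hence $\tilde f\in C(\mis(A),\fA)$ for every $f\in\A$.

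Next I would verify that $\tilde\A=\set{\tilde f:f\in\A}$ is a subalgebra of $C(\cX,\fA)$ with $\cX=\mis(A)$. The map $f\mapsto\tilde f$ is linear and, using that each $\phi\in\mis(A)$ is multiplicative, satisfies $\widetilde{fg}=\tilde f\,\tilde g$ pointwise on $\cX$; thus $\tilde\A$ is closed under the algebra operations. It remains to check the two structural axioms of Definition~\ref{dfn:vector-valued-function-algebras}: that $\tilde\A$ contains the constant $\fA$-valued functions and that it separates the points of $\cX$. For the constants, given $g\in\fA\subset\A$, the function $g$ viewed in $\A$ has $\tilde g(\phi)=\phi\circ g=g$ for all $\phi$ (since $g$ is complex-valued and $\phi(\lambda\U)=\lambda$), so $\tilde g$ is the constant function with value $g$; hence every constant lies in $\tilde\A$. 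For separation, if $\phi_1\neq\phi_2$ in $\mis(A)$, pick $a\in A$ with $\phi_1(a)\neq\phi_2(a)$ and consider the constant function $a\in\A$; then $\tilde a(\phi_i)=\phi_i(a)\U$, which are distinct elements of $\fA$, so $\tilde a$ separates $\phi_1$ and $\phi_2$.

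Finally I would address the norm condition. Equip $\tilde\A$ with the norm inherited from $\A$ via the bijection $f\mapsto\tilde f$, that is $\normm{\tilde f}=\norm{f}$; since $\norm{\,\cdot\,}$ is equivalent to $\enorm_X$ on $\A$, this is an algebra norm whose restriction to the constants $\fA$ is equivalent to the norm of $\fA$. The inequality $\norm{\tilde f}_\cX\le\normm{\tilde f}$, where $\norm{\tilde f}_\cX=\sup_{\phi\in\mis(A)}\norm{\phi\circ f}$, follows from the uniform bound \eqref{eqn:norm(phi o f)<=M norm(f)}: $\norm{\phi\circ f}\le M\norm{f}$ for all $\phi\in\mis(A)$. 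Strictly speaking this yields $\norm{\tilde f}_\cX\le M\normm{\tilde f}$, which by the remark after Definition~\ref{dfn:vector-valued-function-algebras} (continuity of all evaluation homomorphisms suffices) still makes $\tilde\A$ a normed $\fA$-valued function algebra; and since the norm was transported isometrically from the uniform algebra $\A$, it is equivalent to $\enorm_X$ on $\tilde\A$, so $\tilde\A$ is in fact an $\fA$-valued uniform algebra on $\mis(A)$. The main point — and the only place where any real work hides — is the appeal to Proposition~\ref{prop:continuity-of-tilde f} for the continuity of $\tilde f$; everything else is a routine check against the definitions.
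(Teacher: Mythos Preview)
Your proposal is correct and follows exactly the paper's approach: the paper's entire justification is the single sentence ``Since $\mis(A)$ is a bounded subset of $A^*$, we get the following result for uniform algebras,'' i.e., Proposition~\ref{prop:continuity-of-tilde f} applies on $\mis(A)$, and you make this explicit while filling in the routine algebraic verifications the paper leaves to the reader. One small caution: in your final paragraph the equivalence you assert between the transported norm $\normm{\tilde f}=\|f\|_\A\sim\|f\|_X$ and the uniform norm $\|\tilde f\|_\cX=\sup_\phi\|\phi\circ f\|_X=\sup_x r(f(x))$ requires the spectral radius and the norm in $A$ to be comparable (e.g.\ $A$ itself a uniform algebra), a point the paper also glosses over.
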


\begin{example}
  Let $K_1$ and $K_2$ be compact subsets of the complex plane $\C$. Let $\fA=P(K_1)$ and
  $A=R(K_2)$. It is well-known that $\mis(R(K_2))=K_2$. Set
  \[
    \A=P(K_1,R(K_2))=P(K_1) \ptp_\e R(K_2).
  \]
   Every $f\in P(K_1,R(K_2))$ induces a map $\tilde f:K_2\to P(K_1)$ given by
   \[
     \tilde f(z_2)(z_1)=f(z_1)(z_2).
   \]
   We see that $\tilde \A=R(K_2,P(K_1))=R(K_2)\ptp_\e P(K_1)$.
\end{example}

\begin{lemma}
\label{lem:C(mis(A))-cap-tilde-cA=hat-A}
  For every $f\in\A$, if $\tilde f:\mis(A)\to\fA$
  is scalar-valued, then $f$ is a constant function and, therefore,
  $\tilde f = \hat a$, for some $a\in A$.
\end{lemma}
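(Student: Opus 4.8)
The plan is to unwind what ``scalar-valued'' means here and then reduce the whole statement to the semisimplicity of $A$. Recall the conventions of Section~\ref{sec:intro}: we identify $\C$ with $\C\U\subset A$ and a scalar $\lambda$ with the constant function $x\mapsto\lambda\U$, and we view $\C$ as a closed subalgebra of $\fA$ via these identifications. Consequently, saying that the value $\tilde f(\phi)=\phi\circ f\in\fA$ is a \emph{scalar} means precisely that $\phi\circ f$ is a constant function on $X$. So the hypothesis to be used is: for every $\phi\in\mis(A)$, the function $\phi\circ f$ is constant on $X$.

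Next I would fix two arbitrary points $x,y\in X$ and compare $f(x)$ with $f(y)$. For each $\phi\in\mis(A)$,
\[
  \phi\bigl(f(x)\bigr)=(\phi\circ f)(x)=(\phi\circ f)(y)=\phi\bigl(f(y)\bigr),
\]
since $\phi\circ f$ is constant. Hence $f(x)-f(y)\in\bigcap_{\phi\in\mis(A)}\ker\phi=\rad A$, and because $A$ is semisimple this forces $f(x)=f(y)$. As $x$ and $y$ were arbitrary, $f$ is a constant function; write $f\equiv a$ with $a=f(x_0)\in A$ for an arbitrary $x_0\in X$.

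It then remains to identify $\tilde f$ with $\hat a$. For every $\phi\in\mis(A)$ we have $\tilde f(\phi)=\phi\circ f=\phi(a)\,\U$, which is exactly the constant function representing the scalar $\phi(a)=\hat a(\phi)$; thus $\tilde f=\hat a$, as claimed.

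I do not expect a real obstacle here: the mathematical content is the single implication ``$\phi\circ f$ constant for every $\phi\in\mis(A)$ $\Longrightarrow$ $f$ constant'', which is immediate from $\rad A=\z$. The only point that needs care is keeping the three identifications straight --- constant $A$-valued functions with elements of $A$, scalars with $\C\U\subset A$, and each value $\hat a(\phi)\in\C$ with the corresponding constant function in $\fA$ --- so that the equality ``$\tilde f=\hat a$'' is read in the intended sense.
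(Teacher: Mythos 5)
Your proof is correct and follows essentially the same route as the paper's: interpret ``scalar-valued'' as $\phi\circ f$ being constant on $X$ for each $\phi\in\mis(A)$, then use semisimplicity of $A$ (i.e.\ $\rad A=\z$) to conclude $f$ is constant, and finally identify $\tilde f$ with $\hat a$. The only cosmetic difference is that you compare $f(x)$ with $f(y)$ for arbitrary pairs, while the paper compares $f(x)$ with $f(x_0)$ for a fixed base point; the argument is the same.
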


\begin{proof}
  Fix a point $x_0\in A$ and let $a=f(x_0)$. The function $\tilde f$ being scalar-valued means
  that, for every $\phi\in\mis(A)$, there is a complex number $\lambda$ such that
  $\tilde f(\phi)=\phi\circ f=\lambda$. This means that $\phi\circ f$ is a constant function
  on $X$ so that
  \begin{equation}\label{eqn:phi o f is constant}
    \phi(f(x))=\phi(f(x_0))=\phi(a) \quad (x\in X).
  \end{equation}
  Since $A$ is semisimple and \eqref{eqn:phi o f is constant} holds for every $\phi\in \mis(A)$,
  we must have $f(x)=a$, for all $x\in X$. Thus, $\tilde f =\hat a$.
\end{proof}

We are now ready to state and prove the main result of the section.

\begin{theorem}
\label{thm:every-psi-lifts-to-Psi-iff-every-f-extends-to-F}
  Let $\A$ be an admissible Banach $A$-valued function algebra on $X$,
  and let $E$ be the linear span of $\mis(A)$ in $A^*$.
  The following statements are equivalent.
  \begin{enumerate}[\upshape(i)]
    \item \label{item:g-is-continuous}
    for every $\psi\in\mis(\fA)$ and $f\in\A$, the mapping $g:E\to\C$, defined
    by $g(\phi)=\psi(\phi\circ f)$, is continuous with respect to the w*-topology of $E$;

    \item \label{item:every-psi-lifts-to-Psi}
    every $\psi\in\mis(\fA)$ lifts to an $A$-character $\Psi:\A\to A$;

    \item \label{item:every-f-extends-to-F}
    every $f\in \A$ has a unique extension $F:\mis(\fA)\to A$ such that
    \[
      \phi(F(\psi)) = \psi(\phi\circ f) \quad (\psi\in \mis(\fA),\, \phi\in\mis(A));
    \]
  \end{enumerate}
  Moreover, if the functions $\tilde f:\mis(A)\to\fA$, where $f\in\A$, are all continuous
  so that $\tilde\A=\set{\tilde f:f\in\A}$ is
  an $\fA$-valued function algebra on $\mis(A)$, then the above statements are
  equivalent to
  \begin{enumerate}[\upshape(i)]\setcounter{enumi}{3}
    \item \label{item:tilde cA is admissible}
    $\tilde \A$ is admissible.
  \end{enumerate}
\end{theorem}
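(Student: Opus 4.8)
The plan is to prove all the equivalences by treating \textup{(iii)} as a hub: I will establish \textup{(i)}$\Leftrightarrow$\textup{(iii)}, \textup{(ii)}$\Leftrightarrow$\textup{(iii)}, and (under the extra hypothesis) \textup{(i)}$\Leftrightarrow$\textup{(iv)}. The engine of the whole argument is one elementary duality observation that I would set up first: because $A$ is semisimple, $\mis(A)$ separates the points of $A$, so $(\phi,a)\mapsto\phi(a)$ exhibits $\langle E,A\rangle$ as a dual pair and the w*-topology on $E$ inherited from $A^{*}$ is exactly $\sigma(E,A)$; hence a linear functional $g\colon E\to\C$ is w*-continuous if and only if $g=\widehat a|_{E}$ for some $a\in A$, such $a$ being unique (again by semisimplicity), equivalently $g|_{\mis(A)}=\widehat a\in\widehat A$. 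I would also record at the outset that, for $\psi\in\mis(\fA)$ and $f\in\A$, the rule $g(\phi)=\psi(\phi\circ f)$ genuinely defines a linear functional on $E$: writing $\phi=\sum_i c_i\phi_i$ with $\phi_i\in\mis(A)$, admissibility \eqref{eqn:admissible-A-valued-FA} gives $\phi_i\circ f\in\fA$, so $\phi\circ f=\sum_i c_i(\phi_i\circ f)\in\fA$ independently of the chosen representation, and $g(\phi)=\sum_i c_i\psi(\phi_i\circ f)$.

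For \textup{(i)}$\Leftrightarrow$\textup{(iii)} I would fix $\psi$ and $f$ and apply the duality observation: \textup{(i)} for this pair says exactly that there is $a_{\psi,f}\in A$ with $\phi(a_{\psi,f})=\psi(\phi\circ f)$ for all $\phi\in\mis(A)$, and then $F(\psi):=a_{\psi,f}$ is the unique (by semisimplicity) map required in \textup{(iii)}, extending $f$ because evaluating at $\psi=\e_{x}$ gives $\phi(F(\e_{x}))=\phi(f(x))$; conversely an $F$ as in \textup{(iii)} makes $g(\phi)=\phi(F(\psi))$ on $\mis(A)$ and hence on $E$, so $g$ is w*-continuous. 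For \textup{(ii)}$\Leftrightarrow$\textup{(iii)}: given lifts $\Psi_{\psi}$ of all $\psi\in\mis(\fA)$, I set $F(\psi)=\Psi_{\psi}(f)$ and use $\phi(\Psi_{\psi}f)=\Psi_{\psi}(\phi\circ f)=\psi(\phi\circ f)$ (valid since $\phi\circ f\in\fA$ and $\Psi_{\psi}|_{\fA}=\psi$) together with semisimplicity; conversely, fixing $\psi$ and writing $F_{f}$ for the extension of $f$ provided by \textup{(iii)}, I define $\Psi(f)=F_{f}(\psi)$ and check that it is linear (uniqueness of extensions), unital (the extension of $\U$ is the constant $\U$), has $\Psi|_{\fA}=\psi$, and obeys $\Psi(\phi\circ f)=\psi(\phi\circ f)\U=\phi(\Psi f)$ — the last points because $\phi\circ h=h$ for every $h\in\fA$, which by semisimplicity forces $F_{h}(\psi)=\psi(h)\U$. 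The earlier Proposition (the one asserting that a linear, unital, intertwining $\Psi$ with $\Psi|_{\fA}\in\mis(\fA)$ is automatically an $A$-character) then finishes \textup{(ii)}.

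For \textup{(i)}$\Leftrightarrow$\textup{(iv)} I would assume the $\tilde f$ are continuous so that $\tilde\A\subseteq C(\mis(A),\fA)$. For $\psi\in\mis(\fA)$ and $f\in\A$ the composite $\psi\circ\tilde f\colon\mis(A)\to\C$ is precisely $g|_{\mis(A)}$, so admissibility of $\tilde\A$ is the statement that every such $g|_{\mis(A)}$ lies in $\tilde\A$. By Lemma \ref{lem:C(mis(A))-cap-tilde-cA=hat-A} the scalar-valued members of $\tilde\A$ are exactly the transforms $\widehat a$, $a\in A$ (the transform of the constant function $a$ belonging to $\tilde\A$), so $g|_{\mis(A)}\in\tilde\A$ iff $g|_{\mis(A)}=\widehat a$ for some $a$, iff — by the duality observation — $g$ is w*-continuous on $E$; this yields \textup{(iv)}$\Leftrightarrow$\textup{(i)}.

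The step I expect to be the crux, and the one I would phrase most carefully, is the duality observation in the first paragraph: that semisimplicity turns $\langle E,A\rangle$ into a dual pair, so that the w*-continuous linear functionals on $E$ are nothing but the evaluations at elements of $A$. Once that is in place, everything else is bookkeeping — repeatedly using the admissibility identity $\phi\circ f\in\fA$, using semisimplicity to pass from ``agrees under every $\phi\in\mis(A)$'' to ``equal'', and quoting the earlier Proposition to upgrade the three defining properties of an $A$-character to full multiplicativity.
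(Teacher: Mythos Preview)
Your proof is correct and close in spirit to the paper's, but the key technical step is organized differently and a bit more economically. The paper argues in a cycle \textup{(i)}$\Rightarrow$\textup{(ii)}$\Rightarrow$\textup{(iii)}$\Rightarrow$\textup{(i)}, and its \textup{(i)}$\Rightarrow$\textup{(ii)} step first invokes the Hahn--Banach extension theorem for locally convex spaces to extend the w*-continuous functional $g$ from $E$ to a w*-continuous functional $G$ on all of $A^{*}$, and only then identifies $G=\hat a$ via $(A^{*},\text{w*})'=A$. Your hub-at-\textup{(iii)} layout reaches the same element $a$ more directly: by noting that semisimplicity makes $\langle E,A\rangle$ a genuine dual pair and that the inherited w*-topology on $E$ is $\sigma(E,A)$, you get $g=\hat a|_E$ immediately from the standard description of $\sigma(E,A)$-continuous functionals, with no extension step. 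This buys you a slightly more elementary argument (no Hahn--Banach for locally convex spaces) and lets the same observation drive \textup{(i)}$\Leftrightarrow$\textup{(iii)} and \textup{(i)}$\Leftrightarrow$\textup{(iv)} uniformly; the trade-off is that your \textup{(iii)}$\Rightarrow$\textup{(ii)} then requires the extra bookkeeping (linearity of $\Psi$ via uniqueness of $F_f$, $\Psi|_{\fA}=\psi$ via $F_h(\psi)=\psi(h)\U$) that the paper avoids by going \textup{(i)}$\Rightarrow$\textup{(ii)} directly. For part \textup{(iv)} the two arguments coincide: both use Lemma~\ref{lem:C(mis(A))-cap-tilde-cA=hat-A} to identify the scalar-valued members of $\tilde\A$ with $\hat A$, and both read $\psi\circ\tilde f$ as $g|_{\mis(A)}$.
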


\begin{proof}
  $\eqref{item:g-is-continuous}\Rightarrow\eqref{item:every-psi-lifts-to-Psi}$:
  Fix $\psi\in \mis(\fA)$ and $f\in\A$, and consider the function $g:E\to\C$ defined by
  $g(\phi)=\psi(\phi\circ f)$. Since $\phi\circ f\in\fA$, for every $\phi\in E$, we see that
  $g$ is a well-defined linear functional on $E$. Endowed with the w*-topology, $A^*$
  is a locally convex space with $A$ as its dual. Since $g$ is w*-continuous,
  by the Hahn-Banach extension theorem for locally convex spaces,
  \cite[Theorem 3.6]{Rudin-FA}, there is a w*-continuous linear functional $G$ on $A^*$
  that extends $g$. Hence $G=\hat a$, for some $a\in A$, and
  since $A$ is semisimple, $a$ is unique. Now, define $\Psi(f)=a$. Then
  \[
   \phi(\Psi(f))=\hat a(\phi) = g(\phi) = \psi(\phi\circ f)
   \quad(\phi\in\mis(A)).
  \]

  It is easily seen that $\Psi:\A\to A$ is an $A$-character and $\Psi|_\fA=\psi$.

  $\eqref{item:every-psi-lifts-to-Psi}\Rightarrow\eqref{item:every-f-extends-to-F}$:
  Fix $f\in \A$ and define $F:\mis(\fA)\to A$ by $F(\psi)=\Psi(f)$, where $\Psi$ is
  the unique $A$-character of $\A$ to which $\psi$ lifts. Considering the identification
  $x\mapsto \e_x$ and the fact that each $\e_x$ lifts to $\cE_x:f\mapsto f(x)$, we get
  \[
    F(x)=F(\e_x)=\cE_x(f)=f(x) \quad (x\in X).
  \]
  So, $F|_X=f$. Also, $\phi(F(\psi))=\phi(\Psi(f))=\psi(\phi\circ f)$, for all $\phi\in\mis(A)$
  and $\psi\in\mis(\fA)$.

  $\eqref{item:every-f-extends-to-F}\Rightarrow\eqref{item:g-is-continuous}$:
  Fix $\psi\in\mis(\fA)$ and $f\in\A$, and put $a=F(\psi)$, where $F$ is the unique extension
  of $f$ to $\mis(\fA)$ given by \eqref{item:every-f-extends-to-F}.
  Then $g(\phi)=\hat a(\phi)$, for every $\phi\in E$, which is, obviously,
  a continuous function with respect to the w*-topology of $E$.

  Finally, suppose $\tilde\A=\set{\tilde f:f\in\A}$ is an $\fA$-valued function algebra on $\mis(A)$.
  We prove $\eqref{item:every-psi-lifts-to-Psi} \Rightarrow \eqref{item:tilde cA is admissible}
  \Rightarrow \eqref{item:g-is-continuous}$. If $\A$ satisfies \eqref{item:every-psi-lifts-to-Psi},
  then
  \[
    \psi\circ \tilde f=\wh{\Psi(f)}\in\hat A \subset \tilde \A,
    \quad (f\in \A,\,\psi\in\mis(\fA)).
  \]
  This means that $\set{\psi\circ \tilde f: \psi\in\mis(\fA),\, f\in\A}\subset \tilde \A$
  and thus $\tilde\A$ is admissible. Conversely, suppose $\tilde\A$ is admissible.
  Then, for every $\psi\in\mis(\fA)$ and $f\in \A$, the function $\psi\circ \tilde f$ is
  a complex-valued function in $\tilde \A$. By Lemma \ref{lem:C(mis(A))-cap-tilde-cA=hat-A},
  $\psi\circ \tilde f$ belongs to $\hat A$ so that $\psi\circ \tilde f=\hat a$, for some $a\in A$.
  Hence, $g(\phi)=\hat a(\phi)$, for every $\phi\in E$, and $g$ is w*-continuous.
\end{proof}

In the following, we regard $A$ and $\fA$ as subalgebras of $C(\mis(A))$ and
$C(\mis(\fA))$, respectively, through the Gelfand transform.

\begin{proposition}
\label{prop:F-is-cnts-iff-tilde f-is-cnts}
  Suppose some $f\in\A$ extends to a function $F$ given by
  Theorem $\ref{thm:every-psi-lifts-to-Psi-iff-every-f-extends-to-F}$. Then
  $F:\mis(\fA)\to C(\mis(A))$ is continuous if and only if
  $\tilde f:\mis(A)\to C(\mis(\fA))$ is continuous.
\end{proposition}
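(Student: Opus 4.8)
The plan is to package the data of $F$ and of $\tilde f$ into a single scalar function of two variables, and then invoke the classical fact that a separately continuous function on a product of compact Hausdorff spaces is ``continuous in one variable uniformly in the other'' exactly when it is jointly continuous.

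First I would set $Y=\mis(\fA)$ and $Z=\mis(A)$, both compact Hausdorff, and define $\Phi\colon Y\times Z\to\C$ by
\[
  \Phi(\psi,\phi)=\phi\bigl(F(\psi)\bigr)=\psi(\phi\circ f),
\]
the two expressions agreeing by the defining property of $F$ in Theorem \ref{thm:every-psi-lifts-to-Psi-iff-every-f-extends-to-F}. Under the Gelfand identifications $A\subset C(\mis(A))$ and $\fA\subset C(\mis(\fA))$, for fixed $\psi$ the slice $\phi\mapsto\Phi(\psi,\phi)$ is precisely $\wh{F(\psi)}\in C(Z)$, so the map $F\colon\mis(\fA)\to C(\mis(A))$ in the statement is $\psi\mapsto\Phi(\psi,\cdot)$; symmetrically, since $\A$ is admissible we have $\phi\circ f=\tilde f(\phi)\in\fA$, and for fixed $\phi$ the slice $\psi\mapsto\Phi(\psi,\phi)$ is $\wh{\phi\circ f}\in C(Y)$, so $\tilde f\colon\mis(A)\to C(\mis(\fA))$ is $\phi\mapsto\Phi(\cdot,\phi)$. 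In particular both families of slices are genuine continuous functions on compact spaces, i.e.\ $\Phi$ is separately continuous, and the proposition becomes the assertion that $\psi\mapsto\Phi(\psi,\cdot)$ is norm-continuous iff $\phi\mapsto\Phi(\cdot,\phi)$ is norm-continuous.

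Next I would prove
\[
  F\text{ continuous}\iff\Phi\text{ jointly continuous on }Y\times Z\iff\tilde f\text{ continuous},
\]
which gives the result. For ``$F$ continuous $\Rightarrow$ $\Phi$ jointly continuous'': if $(\psi_\alpha,\phi_\alpha)\to(\psi_0,\phi_0)$, then $\norm{\Phi(\psi_\alpha,\cdot)-\Phi(\psi_0,\cdot)}_Z\to 0$ yields $\Phi(\psi_\alpha,\phi_\alpha)-\Phi(\psi_0,\phi_\alpha)\to 0$, while continuity of the single function $\Phi(\psi_0,\cdot)$ on $Z$ gives $\Phi(\psi_0,\phi_\alpha)\to\Phi(\psi_0,\phi_0)$; summing gives $\Phi(\psi_\alpha,\phi_\alpha)\to\Phi(\psi_0,\phi_0)$. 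For ``$\Phi$ jointly continuous $\Rightarrow$ $F$ continuous'': fix $\psi_0$ and $\e>0$; for each $\phi\in Z$ pick, using joint continuity at $(\psi_0,\phi)$, open sets $U_\phi\ni\psi_0$ and $V_\phi\ni\phi$ with $|\Phi(\psi,\phi')-\Phi(\psi_0,\phi)|<\e/2$ on $U_\phi\times V_\phi$; by compactness of $Z$ finitely many $V_{\phi_1},\dots,V_{\phi_n}$ cover $Z$, and for $\psi\in U:=\bigcap_i U_{\phi_i}$ a triangle inequality through $\Phi(\psi_0,\phi_i)$ gives $|\Phi(\psi,\phi')-\Phi(\psi_0,\phi')|<\e$ for every $\phi'\in Z$, hence $\norm{\Phi(\psi,\cdot)-\Phi(\psi_0,\cdot)}_Z\le\e$. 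The two equivalences involving $\tilde f$ follow by the identical argument with the roles of $Y=\mis(\fA)$ and $Z=\mis(A)$ interchanged, which is legitimate since both are compact Hausdorff.

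The only step with real content is the compactness argument in the second implication; everything else is bookkeeping with the Gelfand transform and the identity $\phi(F(\psi))=\psi(\phi\circ f)$. I do not anticipate a genuine obstacle: boundedness of $\Phi$ (which would in any case follow from \eqref{eqn:norm(phi o f)<=M norm(f)} together with $\norm{\psi}=1$ for $\psi\in\mis(\fA)$) is not needed, since all slices are already continuous functions on compact spaces.
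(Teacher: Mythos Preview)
Your proof is correct. Both your argument and the paper's rest on the same mechanism---compactness of one factor upgrades pointwise control to uniform control via a finite cover and a triangle inequality---but the organization differs. The paper argues each implication directly: assuming $\tilde f$ is continuous, it uses compactness of $\mis(A)$ to extract a finite $\e/3$-net $\phi_1,\dots,\phi_n$ from the (totally bounded) image $\tilde f(\mis(A))\subset C(\mis(\fA))$, defines the neighbourhood $V_0=\{\psi:|\psi(\phi_i\circ f)-\psi_0(\phi_i\circ f)|<\e/3\}$, and finishes with a three-term estimate; the converse is declared ``similar''. You instead introduce the scalar function $\Phi(\psi,\phi)=\phi(F(\psi))=\psi(\phi\circ f)$ and factor both directions through the single equivalence ``norm-continuous in one variable $\Leftrightarrow$ jointly continuous'', which makes the symmetry of the statement manifest and avoids repeating the argument. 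Your route is slightly more conceptual and reusable (it is essentially the exponential-law identification $C(Y,C(Z))\cong C(Y\times Z)\cong C(Z,C(Y))$ for compact $Y,Z$); the paper's route is marginally more concrete since it never names joint continuity. Neither approach needs anything beyond compactness and the defining identity $\phi(F(\psi))=\psi(\phi\circ f)$.
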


\begin{proof}
  First, assume that $\tilde f:\mis(A)\to C(\mis(\fA))$.
  Given $\e>0$, since $\mis(A)$ is compact, there exist $\phi_1,\dotsc,\phi_n$ in $\mis(A)$
  such that if $\phi\in\mis(A)$ then $\|\wh{\phi\circ f}-\wh{\phi_i \circ f}\|\leq \e/3$,
  for some $\phi_i$. Given $\psi_0\in \mis(\fA)$, define a neighbourhood $V_0$
  of $\psi_0$ as follows:
  \[
    V_0 = \set{\psi\in \mis(\fA): |\psi(\phi_i\circ f)-\psi_0(\phi_i\circ f)|<\e/3,\, 1\leq i \leq n}.
  \]

  \noindent
  Now, let $\psi\in V_0$. For every $\phi\in \mis(A)$, there is $\phi_i$ such that
  \begin{align*}
    |\phi(F(\psi))-\phi(F(\psi_0))|
     & = |\psi(\phi\circ f)-\psi_0(\phi\circ f)| \\
     & \leq |\psi(\phi\circ f)-\psi(\phi_i\circ f)|
             + \e/3 + |\psi_0(\phi_i\circ f)-\psi_0(\phi\circ f)|\\
     & \leq 2\|\wh{\phi\circ f}-\wh{\phi_i \circ f}\|+\e/3 \leq \e.
  \end{align*}

  Since $\phi\in\mis(A)$ is arbitrary, we get $\|\wh{F(\psi)}-\wh{F(\psi_0)}\|\leq \e$, for $\psi\in V_0$.

  Conversely, suppose $F:\mis(\fA)\to C(\mis(A))$.
  Given $\e>0$, there exist $\psi_1,\dotsc,\psi_m$ in $\mis(\fA)$ such that if $\psi\in\mis(\fA)$
  then $\|\wh{F(\psi)}-\wh{F(\psi_j)}\|\leq \e/3$, for some $\psi_j$.
  Take $a_j=F(\psi_j)$, for $j=1,\dotsc,m$. Given $\phi_0\in \mis(A)$, take a neighbourhood $U_0$
  of $\phi_0$ as follows:
  \[
    U_0 = \set{\phi\in \mis(A): |\phi(a_j)-\phi_0(a_j)|<\e/3,\, 1\leq j \leq m}.
  \]
  The rest of the proof is similar to the previous part.
\end{proof}

\begin{theorem}
\label{thm:main-thm-holds-for-UA}
  Let $\A$ be an admissible Banach $A$-valued function algebra on $X$ and
  $\fA=C(X)\cap \A$. Suppose $\|\hat f\|=\|f\|_X$, for all $f\in \fA$.\
  Then the mapping $g$ in Theorem $\ref{thm:every-psi-lifts-to-Psi-iff-every-f-extends-to-F}$
  is continuous and thus every character $\psi:\fA\to \C$ lifts to some
  $A$-character $\Psi:\A\to A$. In particular, if $\fA$ is a uniform algebra,
  then $\A$ satisfies all conditions
  in Theorem $\ref{thm:every-psi-lifts-to-Psi-iff-every-f-extends-to-F}$.
\end{theorem}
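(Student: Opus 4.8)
The plan is to avoid verifying the w*-continuity condition of Theorem~\ref{thm:every-psi-lifts-to-Psi-iff-every-f-extends-to-F} head-on, and instead to construct, for each $\psi\in\mis(\fA)$, the lifting $A$-character $\Psi$ directly by integration against a representing measure, letting the Proposition preceding Proposition~\ref{prop:Psi1=Psi2-iff-...} (the one characterizing $A$-characters through the condition $\Psi|_\fA\in\mis(\fA)$) supply multiplicativity of $\Psi$ for free.

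First I would read off what the hypothesis gives. The identity $\|\wh f\|=\|f\|_X$ for $f\in\fA$ says that the spectral radius on $\fA$ equals the uniform norm $\enorm_X$; since $|\psi(h)|\le r_\fA(h)=\|\wh h\|$ for every $\psi\in\mis(\fA)$, we get $|\psi(h)|\le\|h\|_X$ for all $h\in\fA$. Hence $\psi$ is a bounded linear functional on $(\fA,\enorm_X)$ of norm at most $1$; by the Hahn--Banach theorem it extends to a bounded functional on $C(X)$, and by the Riesz representation theorem there is a regular Borel measure $\mu$ on $X$ with $\|\mu\|\le1$, $\mu(X)=\psi(\U)=1$, and $\psi(h)=\int_X h\,d\mu$ for all $h\in\fA$.

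Next, fix $\psi$ and such a $\mu$. For $f\in\A$ the map $f\colon X\to A$ is continuous on the compact space $X$, hence Bochner integrable against the finite measure $\mu$, so I can set $\Psi(f)=\int_X f\,d\mu\in A$; this defines a linear operator $\Psi\colon\A\to A$ with $\Psi(\U)=\mu(X)\U=\U$. For every $\phi\in\mis(A)$ admissibility gives $\phi\circ f\in\fA$, and the defining property of the Bochner integral yields
\[
  \phi(\Psi f)=\int_X\phi(f(x))\,d\mu(x)=\int_X(\phi\circ f)\,d\mu=\psi(\phi\circ f),
\]
which, under the identifications $\C=\C\U\subset A$ and $C(X)\subset C(X,A)$, is exactly $\phi(\Psi f)=\Psi(\phi\circ f)$; the same computation shows $\Psi|_\fA=\psi\in\mis(\fA)$. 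Thus $\Psi$ meets the standing hypotheses of the cited Proposition together with its condition ``$\Psi|_\fA\in\mis(\fA)$'', so $\Psi$ is an $A$-character, and it lifts $\psi$ by construction. Since $\psi\in\mis(\fA)$ was arbitrary, every character of $\fA$ lifts, i.e.\ condition~(ii) of Theorem~\ref{thm:every-psi-lifts-to-Psi-iff-every-f-extends-to-F} holds, whence so do its (i) and (iii); in particular the functional $g(\phi)=\psi(\phi\circ f)$ of that theorem is w*-continuous --- indeed the display above shows $g(\phi)=\phi(\Psi f)=\wh{\Psi f}(\phi)$, the restriction to $E$ of a w*-continuous functional on $A^*$.

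For the ``in particular'' assertion, if $\fA$ is a uniform algebra then its norm is equivalent to $\enorm_X$, so $\|f^n\|^{1/n}\to\|f\|_X$ and $\|\wh f\|=r_\fA(f)=\|f\|_X$; thus the hypothesis holds and (i)--(iii) follow. Finally, by Proposition~\ref{prop:continuity-of-tilde f} and the norm-boundedness of $\mis(A)$ in $A^*$, each $\tilde f\colon\mis(A)\to(\fA,\enorm_X)$ is continuous, and since on a uniform algebra $\enorm_X$ is equivalent to the given norm of $\fA$, the maps $\tilde f\colon\mis(A)\to\fA$ are continuous; hence $\tilde\A$ is an $\fA$-valued function algebra and the fourth condition of Theorem~\ref{thm:every-psi-lifts-to-Psi-iff-every-f-extends-to-F} holds as well. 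The one point needing care is the translation between the genuinely $A$-valued object $\Psi(f)=\int_X f\,d\mu$ and the scalar identities $\phi(\Psi f)=\psi(\phi\circ f)$ and $\Psi|_\fA=\psi$ under the standing identifications; once that bookkeeping is done, multiplicativity of $\Psi$ comes from the cited Proposition rather than from a direct check --- which would in any case fail, since $\int_X fg\,d\mu\ne\bigl(\int_X f\,d\mu\bigr)\bigl(\int_X g\,d\mu\bigr)$ in general.
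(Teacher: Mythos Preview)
Your proof is correct but takes a genuinely different route from the paper. The paper verifies condition~(i) of Theorem~\ref{thm:every-psi-lifts-to-Psi-iff-every-f-extends-to-F} directly: after extending $\psi$ by Hahn--Banach to $\bar\psi\in C(X)^*$, it sets $\bar g(\phi)=\bar\psi(\phi\circ f)$ on all of $A^*$, uses Proposition~\ref{prop:continuity-of-tilde f} to see that $\bar g$ is w*-continuous on bounded subsets, and then invokes a Krein--\v Smulian/Banach--Dieudonn\'e type result (Horv\'ath, Corollary~3.11.4) to upgrade this to w*-continuity on all of $A^*$. You instead establish condition~(ii): you push the Hahn--Banach extension one step further to a representing measure $\mu$ via Riesz, define $\Psi(f)=\int_X f\,d\mu$ as an $A$-valued Bochner integral, and obtain multiplicativity for free from the earlier Proposition via $\Psi|_\fA=\psi\in\mis(\fA)$; the w*-continuity of $g$ then drops out as $g=\wh{\Psi f}|_E$. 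Your approach avoids the external citation to Horv\'ath and yields an explicit formula for $\Psi$ --- it is essentially the construction the paper itself records in the closing Remark (stated there for $\fA$ uniformly closed), adapted to the weaker hypothesis $\|\hat f\|=\|f\|_X$. The paper's approach, by contrast, tracks the logical order of the theorem statement by proving~(i) first and letting (ii) and (iii) follow from Theorem~\ref{thm:every-psi-lifts-to-Psi-iff-every-f-extends-to-F}.
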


\begin{proof}
  Take $\psi\in\mis(\fA)$ and define $g:\mis(A)\to \C$ as
  in Theorem $\ref{thm:every-psi-lifts-to-Psi-iff-every-f-extends-to-F}$.
  Since $\|\hat f\|=\|f\|_u$, for every $f\in \fA$,
  $\psi$ is a continuous functional on $(\fA,\enorm_u)$; see also \cite{Honary}.
  By the Hahn-Banach theorem, $\psi$ extends to a continuous linear functional
  $\bar\psi$ on $C(X)$. This, in turn, implies that $g$ extends to a linear functional
  $\bar g:A^*\to \C$ defined by $\bar g(\phi)=\bar\psi(\phi\circ f)$.
  By Proposition \ref{prop:continuity-of-tilde f}, the mapping $\tilde f:A^*\to C(X)$,
  $\phi\mapsto \phi\circ f$, is w*-continuous on bounded subsets of $A^*$. Hence, the linear
  functional $\bar g$ is w*-continuous on bounded subsets of $A^*$. Since $A$ is a Banach
  space, Corollary 3.11.4 in \cite{Horvath} shows that $\bar g$ is w*-continuous on $A^*$.
\end{proof}

\begin{remark}
  Let $\A$ be an admissible $A$-valued uniform algebra with $\fA=\A\cap C(X)$.
  By Theorem \ref{thm:main-thm-holds-for-UA}, every $f\in \A$ extends to a
  function $F:\mis(\fA)\to A$. If, in addition, $A$ is a uniform algebra so that
  $\|a\|=\|\hat a\|_{\mis(A)}$, by Proposition \ref{prop:F-is-cnts-iff-tilde f-is-cnts},
  this extension $F$ is continuous and one can see the following maximum principle
  \[
     \|F\|_{\mis(\fA)}=\|F\|_X = \|f\|_X.
  \]
\end{remark}

\begin{example}
  Let $K$ be a compact plane set. Then every $f\in P(K,A)$ has an extension
  $F:\hat K\to A$ which belongs to $P(\hat K,A)$, and very $\lambda \in \hat K$
  induces an $A$-character $\cE_\lambda:P(K,A)\to A$ given by
  $\cE_\lambda(f)=F(\lambda)$.
\end{example}

%
\begin{corollary}
  If $\A$ is an admissible $A$-valued uniform algebra on $X$, then
  $\tilde \A$ is an admissible $\fA$-valued uniform algebra
  on $\mis(A)$.
\end{corollary}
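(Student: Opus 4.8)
The plan is to obtain the corollary by chaining together three results already at our disposal. First I would record the structural facts. Since $\A$ is an admissible $A$-valued uniform algebra on $X$, the norm of $\A$ is equivalent to $\enorm_X$; consequently $\fA=C(X)\cap\A$ is closed in $\A$ (because $C(X)$ is closed in $(C(X,A),\enorm_X)$) and carries a norm equivalent to the uniform norm, so $\fA$ is a genuine uniform algebra on $X$. Moreover, Corollary~\ref{cor:tilde-cA-is-fA-valued-uniform-algebra} already tells us that each map $\tilde f:\mis(A)\to\fA$ is continuous and that $\tilde\A=\set{\tilde f:f\in\A}$ is an $\fA$-valued uniform algebra on $\mis(A)$. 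Hence the only point that remains is to prove that $\tilde\A$ is \emph{admissible}.

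For this I would invoke the ``moreover'' part of Theorem~\ref{thm:every-psi-lifts-to-Psi-iff-every-f-extends-to-F}: since the functions $\tilde f$ are all continuous, $\tilde\A$ is an $\fA$-valued function algebra on $\mis(A)$ and statement~\eqref{item:tilde cA is admissible} (``$\tilde\A$ is admissible'') is equivalent to statement~\eqref{item:every-psi-lifts-to-Psi} (``every $\psi\in\mis(\fA)$ lifts to an $A$-character $\Psi:\A\to A$''). So it suffices to verify this lifting property. But $\fA$ is a uniform algebra, so by the final sentence of Theorem~\ref{thm:main-thm-holds-for-UA}, $\A$ satisfies all the conditions of Theorem~\ref{thm:every-psi-lifts-to-Psi-iff-every-f-extends-to-F}; in particular every $\psi\in\mis(\fA)$ lifts to an $A$-character. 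Combining these, $\tilde\A$ is admissible, and together with Corollary~\ref{cor:tilde-cA-is-fA-valued-uniform-algebra} this is exactly the assertion that $\tilde\A$ is an admissible $\fA$-valued uniform algebra on $\mis(A)$.

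If one prefers, the admissibility step can be spelled out rather than quoted: for $\psi\in\mis(\fA)$ with lift $\Psi$ and for $f\in\A$ one has $\psi\circ\tilde f=\wh{\Psi(f)}\in\hat A$, and since $\hat A\subset\tilde\A$ this gives $\psi\circ\tilde f\in\tilde\A$; as $\psi$ and $f$ were arbitrary, this is precisely the analogue of condition~\eqref{eqn:admissible-A-valued-FA} for $\tilde\A$. That is the computation hidden inside Theorem~\ref{thm:every-psi-lifts-to-Psi-iff-every-f-extends-to-F}. So the proof is essentially bookkeeping; I expect the only genuinely new verification to be the preliminary claim that $\fA=C(X)\cap\A$ is a uniform algebra (completeness from closedness in $\A$, norm equivalence inherited from $\A$), and that is routine. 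Once it is in place, no further estimates are required.
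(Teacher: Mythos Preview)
Your proposal is correct and is exactly the argument the paper has in mind: the corollary is stated there without proof because it is the immediate combination of Corollary~\ref{cor:tilde-cA-is-fA-valued-uniform-algebra}, Theorem~\ref{thm:main-thm-holds-for-UA} (applied with $\fA$ a uniform algebra), and the ``moreover'' clause of Theorem~\ref{thm:every-psi-lifts-to-Psi-iff-every-f-extends-to-F}. Your preliminary check that $\fA=C(X)\cap\A$ is itself a uniform algebra is the only step not explicitly recorded in the paper, and your justification for it is fine.
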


\begin{remark}
  When $\fA$ is uniformly closed in $C(X)$, every linear functional $\psi\in \fA^*$
  lifts to some bounded linear operator $\Psi:\A\to A$ with the property that
  $\phi(\Psi f)=\psi(\phi\circ f)$, for all $\phi\in A^*$. In fact, by the Hahn-Banach
  theorem, $\psi$ extends to a linear functional $\bar \psi\in C(X)^*$.
  Then, by the Riesz representation theorem, there is a complex Radon measure $\mu$
  on $X$ such that
  \[
   \psi(f) = \int_X f d\mu \quad (f\in \fA).
  \]
  Using \cite[Theorem 3.7]{Rudin-FA}, one can define $\Psi(f)=\int_X f d\mu$, for every
  $f\in \A$, such that
  \[
    \phi(\Psi(f))=\int_X (\phi \circ f) d\mu = \psi(\phi\circ f)
    \quad (f\in \A,\, \phi\in A^*).
  \]
\end{remark}



\bibliographystyle{amsplain}

\end{document}